\newtheorem{theorem}{Theorem}
\newtheorem{definition}[theorem]{Definition}
\newtheorem{lemma}[theorem]{Lemma}
\newtheorem{proposition}[theorem]{Proposition}
\newtheorem{remark}[theorem]{Remark}
\begin{document}
\author{A. Fotiadis}
\email{fotiadisanestis@math.auth.gr}
\author{N. Mandouvalos}
\email{nikosman@math.auth.gr}
\author{M. Marias}
\email{marias@math.auth.gr}
\curraddr{Department of Mathematics, Aristotle University of Thessaloniki,
Thessaloniki 54.124, Greece}
\date{}
\dedicatory{Dedicated to the memory of Georges Georganopoulos}
\subjclass{35Q55, 43A85, 22E30, 35P25, 47J35, 58D25}
\keywords{Semisimples Lie groups, Symmetric spaces, Locally symmetric
spaces, Schr\"{o}dinger equations, dispersive estimates, Strichartz
estimates. }
\title[Schr\"{o}dinger Equation]{Schr\"{o}dinger Equation on Locally
Symmetric Spaces}
\maketitle

\begin{abstract}
We prove dispersive and Strichartz estimates for Schr\"{o}-dinger equations
on a class of locally symmetric spaces $\Gamma \backslash X$, where $X=G/K$
is a symmetric space and $\Gamma $ is a torsion free discrete subgroup of $G$%
. We deal with the cases when either $X$ has rank one or $G$ is complex. We
present Strichartz estimates applications to the well-posedness and
scattering for nonlinear Schr\"{o}dinger equations.
\end{abstract}

\section{Introduction and statement of the results}

Let $M$ be a Riemannian manifold and denote by $\Delta $ its
Laplace-Beltrami operator. The nonlinear Schr\"{o}dinger equation (NLS) on $%
M $ 
\begin{equation}
\left\{ 
\begin{array}{l}
i\partial _{t}u\left( t,x\right) +\Delta _{x}u\left( t,x\right) =F\left(
u\left( t,x\right) \right) , \\ 
u\left( 0,x\right) =f\left( x\right) ,%
\end{array}%
\right.  \label{NLS}
\end{equation}
has been extensively studied the last thirty years. Its study relies on
precise estimates of the kernel $s_{t}$ of the Schr\"{o}dinger operator $%
e^{it\Delta }$, the heat kernel of pure imaginary time. The estimates of $%
s_{t}$ allow us to obtain \textit{dispersive estimates }of the operator $%
e^{it\Delta }$ of the form 
\begin{equation}
\left\Vert e^{it\Delta }\right\Vert _{L^{\widetilde{q}^{\prime }}\left(
M\right) \rightarrow L^{q}\left( M\right) }\leq c\psi \left( t\right) \text{%
, \ }t\in \mathbb{R},  \label{diest000}
\end{equation}%
for all $q,\widetilde{q}\in \left( 2,\infty \right] $, where $\psi $ is a
positive function and $\widetilde{q}^{\prime }$ is the conjugate of $%
\widetilde{q}$.

Dispersive estimates of $e^{it\Delta }$ as above, allow us to obtain \textit{%
Strichartz estimates }of the solutions $u\left( t,x\right) $ of (\ref{NLS}):%
\textit{\ } 
\begin{equation}
\left\Vert u\right\Vert _{L^{p}\left( \mathbb{R};L^{q}\left( M\right)
\right) }\leq c\left\{ \left\Vert f\right\Vert _{L^{2}\left( M\right)
}+\left\Vert F\right\Vert _{L^{\widetilde{p}^{\prime }}\left( \mathbb{R};L^{%
\widetilde{q}^{\prime }}\left( M\right) \right) }\right\} ,  \label{stest1}
\end{equation}%
for all pairs $\left( \frac{1}{p},\frac{1}{q}\right) $ and $\left( \frac{1}{%
\widetilde{p}},\frac{1}{\widetilde{q}}\right) $ which lie in a certain
triangle\textit{. }

Strichartz estimates have applications to well-posedness and scattering
theory for the NLS equation.

In the case of $\mathbb{R}^{n}$, the first such estimate was obtained by
Strichartz himself \cite{STRI} in a special case. Then, Ginibre and Velo 
\cite{GIVE} obtained the complete range of estimates except the case of
endpoints which were proved by Keel and Tao \cite{KETA}.

In view of the important applications to nonlinear problems, many attempts
have been made to study the dispersive properties for the corresponding
equations on various Riemannian manifolds (see e.g., \cite%
{ANPI,ANPIVA,BA,BACASTA,BOU,BU,HAS,IONSTA,PI1,PI2} and references within).
More precisely, dispersive and Strichartz estimates for the Schr\"{o}dinger
equation on real hyperbolic spaces have been stated by Banica \cite{BA},
Pierfelice \cite{PI1,PI2}, Banica et al. \cite{BACASTA}, Anker and
Pierfelice \cite{ANPI}, Ionescu and Staffilani \cite{IONSTA}. In a recent
paper Anker, Pierfelice and Vallarino \cite{ANPIVA} treat NLS in the context
of Damek-Ricci spaces, which include all rank one symmetric spaces of
noncompact type.

In the present work we treat NLS equations on a class of locally symmetric
spaces.

\subsection{The class $(S)$ of locally symmetric spaces}

In this section we describe the class of locally symmetric spaces on which
we shall treat NLS equations.

For the statement of the results we need to introduce some notation. For
more details see Section 2. Let $G$ be a semisimple Lie group, connected,
noncompact, with finite center and $K$ be a maximal compact subgroup of $G$.
We denote by $X$ the Riemannian symmetric space $G/K$.

Denote by $\mathfrak{g}$ and $\mathfrak{k}$ the Lie algebras of $G$ and $K$.
Let also $\mathfrak{p}$ be the subspace of $\mathfrak{g}$ which is
orthogonal to $\mathfrak{k}$ with respect to the Killing form. The Killing
form induces a $K$-invariant scalar product on $\mathfrak{p}$ and hence a $G$%
-invariant metric on $G/K$. Denote by $\Delta $ the Laplace-Beltrami
operator on $X$ and by $d\left( .,.\right) $ the Riemannian distance and by $%
dx$ the associated measure on $X$.

Let $\Gamma $ be a discrete torsion free subgroup of $G$. Then the locally
symmetric space $M=\Gamma \backslash X$, equipped with the projection of the
canonical Riemannian structure of $X$, becomes a Riemannian manifold. We
denote also by $\Delta $ the laplacian on $M$, by $d(.,.)$ the Riemannian
distance and by $dx$ the associated measure on $M$. It is important to note
that in general, locally symmetric spaces have not bounded geometry since
the injectivity radius of $M$ is not in general strictly positive, \cite%
{DAMA}.

Fix $\mathfrak{a}$ a maximal abelian subspace of $\mathfrak{p}$ and denote
by $\mathfrak{a}^{\ast }$ the real dual of $\mathfrak{a}$. If $\dim 
\mathfrak{a}=1$, we say that $X$ has \textit{rank one}. Let $\Sigma \subset 
\mathfrak{a}^{\ast }$, be the root system of ($\mathfrak{g}$, $\mathfrak{a}$%
). Denote by $W$ the Weyl group associated to $\Sigma $ and choose a set $%
\Sigma ^{+}$ of positives roots. Denote by $\rho $ the half sum of positive
roots counted with their multiplicities. Let $\mathfrak{a}^{+}\subset 
\mathfrak{a}$ be the corresponding positive Weyl chamber and let $\overline{%
\mathfrak{a}_{+}}$ be its closure. Set 
\begin{equation*}
\rho _{m}=\min_{H\in \overline{\mathfrak{a}_{+}},\text{ }\left\vert
H\right\vert =1}\rho \left( H\right) .
\end{equation*}%
Note that in the rank one case $\rho _{m}=\left\vert \rho \right\vert $.

Denote by $s_{t}$ the fundamental solution of the Schr\"{o}dinger equation
on the symmetric space $X$: 
\begin{equation*}
i\partial _{t}s_{t}\left( x,y\right) =\Delta s_{t}\left( x,y\right) ,\text{
\ }t\in \mathbb{R}\text{, \ }x,y\in X\text{.}
\end{equation*}%
Then $s_{t}$ is a $K$-bi-invariant function and the Schr\"{o}dinger operator 
$S_{t}=e^{it\Delta }$ on $X$ is defined as a convolution operator:

\begin{equation}
S_{t}f(x)=\int_{G}f(y)s_{t}(y^{-1}x)dy=\left( f\ast s_{t}\right) \left(
x\right) ,\text{ \ \ }f\in C_{0}^{\infty }(X).  \label{so}
\end{equation}

Using that $s_{t}$ is $K$-bi-invariant, we deduce that if $f\in
C_{0}^{\infty }(M)$, then $S_{t}f$ is right $K$-invariant and left $\Gamma $%
-invariant i.e. a function on the locally symmetric space $M$. Thus the Schr%
\"{o}dinger operator $\widehat{S}_{t}$ on $M$ is also defined by formula (%
\ref{so}).

The first ingredient for the proof of the dispersive estimate (\ref{diest000}%
) are precise estimates of the kernel $s_{t}$. In the present work we deal
with the cases when either $X$ has rank one or $G$ is complex. The reason is
that in these cases the expression of the spherical Fourier transform allow
us to obtain precise estimates of kernel $s_{t}$. They are obtained in \cite[%
Section 3]{ANPIVA} in the context of rank one symmetric spaces and in
Section \ref{kern} in the case $G$ complex.

Set 
\begin{equation}
\widehat{s}_{t}(x,y)=\sum_{\gamma \in \Gamma }s_{t}(x,\gamma y).
\label{ker0}
\end{equation}%
Recall that the critical exponent $\delta \left( \Gamma \right) $ is defined
by 
\begin{equation*}
\delta \left( \Gamma \right) =\inf \left\{ \alpha >0:P_{\alpha }\left(
x,y\right) <\infty \right\} ,
\end{equation*}%
where 
\begin{equation*}
P_{\alpha }\left( x,y\right) =\sum_{\gamma \in \Gamma }e^{-\alpha d\left(
x,\gamma y\right) }
\end{equation*}%
are the Poincar\'{e} series, \cite{WE}. Note that $\delta \left( \Gamma
\right) \leq 2\left\vert \rho \right\vert $.

In Section \ref{SCK}, we show that the series (\ref{ker0}) converges when $%
\delta \left( \Gamma \right) <\rho _{m}$ and that $\widehat{S}_{t}$ is an
integral operator on $M$ with kernel $\widehat{s}_{t}(x,y)$: 
\begin{equation}
\widehat{S}_{t}f(x)=\int_{M}f(y)\widehat{s}_{t}(x,y)dy.  \label{kap}
\end{equation}

The expression (\ref{kap}) as well as the norm estimates of $\widehat{s}_{t}$
obtained in the same section are the second ingredient for the proof of the
dispersive estimate (\ref{diest000}) of the operator $\widehat{S}_{t}$.

The third ingredient is the following analogue of Kunze and Stein phenomenon
on locally symmetric spaces, proved in \cite{LOMAjga}, and presented in
detail in Section \ref{KSPH}. Let $\lambda _{0}$ be the bottom of the $L^{2}$%
-spectrum of $-\Delta $ on $M$. Then in \cite{LOMAjga} it is proved that
there exists a vector $\eta _{\Gamma }$ on the Euclidean sphere $S\left(
0,\left( \left\vert \rho \right\vert ^{2}-\lambda _{0}\right) ^{1/2}\right) $
of $\mathfrak{a}^{\ast }$, such that for all $p\in \left( 1,\infty \right) $
and for every $K$-bi-invariant function $\kappa $, the convolution operator $%
\ast \left\vert \kappa \right\vert $ with kernel $\left\vert \kappa
\right\vert $ satisfies the estimate%
\begin{equation}
\left\Vert \ast \left\vert \kappa \right\vert \right\Vert
_{L^{p}(M)\rightarrow L^{p}(M)}\leq \int_{G}\left\vert \kappa \left(
g\right) \right\vert \varphi _{-i\eta _{\Gamma }}\left( g\right) ^{s\left(
p\right) }dg,  \label{ks00}
\end{equation}%
where $\varphi _{\lambda }$ is the spherical function with index $\lambda $
and 
\begin{equation*}
s\left( p\right) =2\min \left( \left( 1/p\right) ,\left( 1/p^{\prime
}\right) \right) .
\end{equation*}

Note that Leuzinger \cite{LE} proved that if $G$ has no compact factors, its
center is trivial and $\delta \left( \Gamma \right) \leq \rho _{m}$, then $%
\lambda _{0}=\left\vert \rho \right\vert ^{2}$. In this case, $\eta _{\Gamma
}=0$ in (\ref{ks00}). Note that this is also the case if $M$ has rank one
and $\delta \left( \Gamma \right) \leq \rho _{m}$. For simplicity we shall
assume that \textit{all locally symmetric spaces} we deal with, satisfy $%
\lambda _{0}=\left\vert \rho \right\vert ^{2}$.

\begin{definition}
We say that the locally symmetric space $M=\Gamma \backslash G/K$ \textit{%
belongs in the class }$(S)$ if

(i) $\delta \left( \Gamma \right) <\rho _{m}$, and

(ii) for every $K$-bi-invariant function $\kappa $ the estimate (\ref{ks00})
is satisfied with $\eta _{\Gamma }=0$.
\end{definition}

In particular, if $M\in (S)$ and either $M$ has rank one or $G$ is complex,
we say that $M$ belongs in the class $(S_{0})$. Note that if $M\in (S_{0})$,
then the three ingredients for the proof of the dispersive estimate (\ref%
{diest000}) of the operator $\widehat{S}_{t}$ are available. In the present
work we treat the case $M\in (S_{0})$.

As it is explained in Section \ref{KSPH}, the estimate (\ref{ks00}) is valid
if $M$ belongs in one of the following three classes:

(i). $\Gamma $ \textit{is a lattice }i.e. $\limfunc{vol}\left( \Gamma
\backslash G\right) <\infty $,

(ii). $G$ \textit{possesses Kazhdan's property (T)}. Recall that $G$ has
property (T) iff $G$ has no simple factors locally isomorphic to $SO(n,1)$
or $SU(n,1)$, \cite[ch. 2]{HAVA}. In this case $\Gamma \backslash G/K\in
\left( S\right) $ for all discrete subgroups $\Gamma $ of $G$ with $\delta
\left( \Gamma \right) <\left\vert \rho \right\vert $.

Recall also that non-compact rank one symmetric spaces are the real, complex
and quaternionic hyperbolic spaces, denoted $H^{n}\left( \mathbb{R}\right) $%
, $H^{n}\left( \mathbb{C}\right) $, $H^{n}\left( \mathbb{H}\right) $
respectively, and the octonionic hyperbolic plane $H^{2}\left( \mathbb{O}%
\right) $. They have the following representation as quotients: 
\begin{eqnarray*}
H^{n}\left( \mathbb{R}\right) &=&SO(n,1)/SO(n),\text{ \ }H^{n}\left( \mathbb{%
C}\right) =SU(n,1)/SU(n), \\
H^{n}\left( \mathbb{H}\right) &=&Sp(n,1)/Sp(n)\text{, and }H^{2}\left( 
\mathbb{O}\right) =F_{4}^{-20}/Spin\left( 9\right) \mathbf{.}
\end{eqnarray*}%
So, $\Gamma \backslash H^{n}\left( \mathbb{H}\right) $ and $\Gamma
\backslash H^{2}\left( \mathbb{O}\right) $ belong in the class $(S)$ for 
\textit{all} discrete subgroups $\Gamma $ of $Sp\left( n,1\right) $ and $%
F_{4}^{-20}$ respectively such that $\delta \left( \Gamma \right)
<\left\vert \rho \right\vert $.

(iii) all quotients $\Gamma \backslash H^{n}\left( \mathbb{R}\right) $ and $%
\Gamma \backslash H^{n}\left( \mathbb{C}\right) $ with $\Gamma $ \textit{%
amenable} with $\delta \left( \Gamma \right) <\left\vert \rho \right\vert $,
even if they have infinite volume.

\subsection{Dispersive and Strichartz estimates on locally symmetric spaces}

The main result of the present paper is the following dispersive estimate.

\begin{theorem}
\label{diest13}Assume that $M\in (S_{0})$. If $q,\tilde{q}\in \left(
2,\infty \right] $, then for $|t|<1$, 
\begin{equation*}
\Vert \widehat{S}_{t}\Vert _{L^{\tilde{q}^{\prime }}(M)\rightarrow
L^{q}(M)}\leq c|t|^{-n\max {\{}\left( 1/2\right) -\left( 1/q\right) {,\left(
1/2\right) -\left( 1/\widetilde{q}\right) \}}},
\end{equation*}%
while for $|t|\geq 1$, 
\begin{equation*}
\Vert \widehat{S}_{t}\Vert _{L^{\tilde{q}^{\prime }}(M)\rightarrow
L^{q}(M)}\leq c|t|^{-3/2}\text{,}
\end{equation*}%
if $M$ has rank one, and 
\begin{equation*}
\Vert \widehat{S}_{t}\Vert _{L^{\tilde{q}^{\prime }}(M)\rightarrow
L^{q}(M)}\leq c|t|^{-n/2},\text{ }
\end{equation*}%
if $G$ is complex.
\end{theorem}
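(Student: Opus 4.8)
The plan is to establish the dispersive estimate by combining the three ingredients advertised in the introduction: pointwise estimates of the Schr\"odinger kernel $s_t$ on $X$, the summation of $\widehat{s}_t(x,y)=\sum_{\gamma\in\Gamma}s_t(x,\gamma y)$ with norm control of $\widehat{S}_t$, and the Kunze--Stein-type inequality \eqref{ks00} with $\eta_\Gamma=0$. First I would reduce matters to two ``extremal'' mapping properties, namely the $L^{\widetilde q'}\to L^q$ bound and, via duality and interpolation, the $L^2\to L^2$ bound (which is trivial, $\|\widehat S_t\|_{2\to2}=1$) and an $L^1\to L^\infty$-type bound; the $\max$ in the exponent for $|t|<1$ is precisely the outcome of interpolating between the $L^2\to L^2$ estimate and an $L^{q'}\to L^q$ estimate on the diagonal $q=\widetilde q$, so it suffices to treat the case $q=\widetilde q$ and then interpolate. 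This mirrors exactly the scheme of Anker--Pierfelice and Anker--Pierfelice--Vallarino on $X$ itself, the new point being that we must transfer it from $X$ to $M=\Gamma\backslash X$.

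For the diagonal estimate $\|\widehat S_t\|_{L^{q'}(M)\to L^q(M)}$ with $q\in(2,\infty]$, I would write $\widehat S_t$ as convolution (on $M$) against the $K$-bi-invariant kernel $s_t$, as in \eqref{so}, and invoke \eqref{ks00} with $\kappa=s_t$ and $\eta_\Gamma=0$: this gives
\begin{equation*}
\|\widehat S_t\|_{L^{q}(M)\to L^{q}(M)}\le \int_G |s_t(g)|\,\varphi_0(g)^{s(q)}\,dg,
\end{equation*}
and by duality $\|\widehat S_t\|_{L^{q'}(M)\to L^q(M)}$ is controlled by a similar integral (after using $\widehat S_t = \widehat S_{t/2}\circ\widehat S_{t/2}$ to split into an $L^{q'}\to L^2$ and $L^2\to L^q$ piece, or directly by the Kunze--Stein bilinear form). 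Since $\varphi_0(g)\le c(1+|g|)^{\#\Sigma^+_r}e^{-\langle\rho,g^+\rangle}$ with the standard spherical function estimate, the problem is reduced to bounding the integral
\begin{equation*}
\int_G |s_t(g)|\,\varphi_0(g)^{\theta}\,dg = \int_{\overline{\mathfrak a_+}} |s_t(H)|\,\varphi_0(H)^{\theta}\,\delta(H)\,dH
\end{equation*}
for $\theta=s(q)\in(0,1)$, using the integration formula in polar coordinates with density $\delta(H)\asymp \prod_{\alpha\in\Sigma^+}(\sinh\langle\alpha,H\rangle)^{m_\alpha}\asymp e^{2\langle\rho,H\rangle}$ for $H$ away from the walls.

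The heart of the matter is therefore the explicit kernel bounds for $s_t$: for rank one these are quoted from \cite[Section 3]{ANPIVA}, and for $G$ complex they are derived in Section \ref{kern} from the Gaussian-type formula for the spherical Fourier transform of $e^{it\Delta}$ (in the complex case the heat/Schr\"odinger kernel has a closed form because $\varphi_\lambda$ and the Plancherel density are elementary). One has schematically, for $|t|\le 1$, a bound of the form $|s_t(H)|\le c\,|t|^{-n/2}(1+|H|)^{a}\,e^{-\langle\rho,H\rangle}e^{-|H|^2/(c|t|)}$-type behavior near the origin giving the $|t|^{-n(1/2-1/q)}$ factor after interpolation, while for $|t|\ge 1$ one has a bound like $|s_t(H)|\le c\,|t|^{-n/2}(1+|H|)^b\,e^{-\langle\rho,H\rangle}$ in the complex case, and in the rank one case the more delicate stationary-phase bound with the extra decay yielding $|t|^{-3/2}$. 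Plugging these into the weighted integral above, the factor $e^{-\theta\langle\rho,H\rangle}$ from $\varphi_0^\theta$ together with the $e^{-\langle\rho,H\rangle}$ from $s_t$ and the $e^{2\langle\rho,H\rangle}=e^{+2\langle\rho,H\rangle}$ from $\delta(H)$ must combine to something integrable; since $\theta>0$ and the $\langle\rho,H\rangle$ from the kernel is genuinely $|\rho|\,|H|\ge\rho_m|H|$-strong while the polynomial weights are harmless, the exponential $e^{-(1+\theta)\langle\rho,H\rangle+2\langle\rho,H\rangle}=e^{(1-\theta)\langle\rho,H\rangle}$ is \emph{not} automatically controlled --- this is precisely where condition (i) $\delta(\Gamma)<\rho_m$ and the structure of the Kunze--Stein bound enter, and getting the bookkeeping of these competing exponentials right, uniformly in $t$, is the main obstacle. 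I expect the resolution to be that \eqref{ks00} should really be applied not with the bare $\varphi_0$ but in the sharper form that for $p\in(2,\infty]$ one gains $\varphi_0^{s(p)}$ with $s(p)=2/p'<2$, and one must exploit the full strength of the kernel decay (the Gaussian in $|H|^2/|t|$ for small $t$, and the sharp exponential plus polynomial for large $t$) rather than only its leading exponential; once the kernel estimates from Section \ref{kern} and \cite{ANPIVA} are in hand, the remaining work is the $t$-uniform estimation of a one- (or several-) dimensional integral, which is routine. I would organize the write-up as: (1) recall the kernel estimates; (2) state and apply the Kunze--Stein/duality reduction to a weighted $L^1(G)$ norm of $s_t$; (3) split into $|t|<1$ and $|t|\ge1$ and estimate the resulting integrals using polar coordinates; (4) interpolate with $L^2\to L^2$ to obtain the $\max$ in the exponent and the full off-diagonal range $q,\widetilde q\in(2,\infty]$.
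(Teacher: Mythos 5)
Your overall architecture (kernel estimates on $X$, transfer to $M$, Kunze--Stein, interpolation with the trivial $L^2\to L^2$ bound) matches the paper's, but the central analytic step is left unresolved, and the two fixes you suggest for it would not work. You correctly identify the obstacle: if you apply (\ref{ks00}) in the form $\Vert \ast s_t\Vert_{L^q\to L^q}\le \int_G |s_t|\,\varphi_0^{s(q)}\,dg$, the integrand behaves like $(1+|H|)^{a}e^{-(1+\theta)\langle\rho,H\rangle}e^{2\langle\rho,H\rangle}=(1+|H|)^a e^{(1-\theta)\langle\rho,H\rangle}$ with $\theta=s(q)<1$, which diverges. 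But your first proposed remedy (``use the sharper form with $\varphi_0^{s(p)}$'') is exactly the estimate you already wrote down, and your second (``exploit the Gaussian in $|H|^2/|t|$'') is unavailable: the Schr\"odinger kernel carries the purely oscillatory factor $e^{-i|H|^2/4t}$ of modulus one, so there is no Gaussian decay to exploit. The paper's resolution (Theorem \ref{diest5} and Lemma \ref{diest12a}) is different in kind: one interpolates the convolution inequality itself between the endpoints $L^2(M)\ast A_2\subset L^2(M)$ and $L^1(M)\ast A_\infty\subset L^\infty(M)$, obtaining $\Vert f\ast\kappa\Vert_{L^q}\le c\Vert\kappa\Vert_{A_q}\Vert f\Vert_{L^{q'}}$ with $A_q=L^{q/2}(\overline{\mathfrak a_+},\varphi_0\delta)$. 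The kernel then enters through its $L^{q/2}$ norm, so $|s_t|^{q/2}\sim e^{-(q/2)|\rho||H|}$ beats the weight $\varphi_0\delta\sim e^{|\rho||H|}$ precisely because $q>2$; this is the mechanism that makes the large-time bound close, and a cutoff $s_t^R=\omega_R s_t$ is needed since $s_t$ is not a priori in the relevant space.

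Two further points. First, your reduction of the off-diagonal range to the diagonal $q=\widetilde q$ plus interpolation with $L^2\to L^2$ only produces the diagonal segment of mapping pairs; to reach arbitrary $(\,\widetilde q\,',q)$ with $q\ne\widetilde q$ and to produce the $\max$ in the small-time exponent, the paper interpolates among the three families $L^1\to L^q$, $L^{\widetilde q\,'}\to L^\infty$ and $L^2\to L^2$ (Lemma \ref{diest1}), whose convex hull covers the whole square. Second, those $L^1\to L^q$ bounds rest on Proposition \ref{kest2}, whose key step is the unfolding inequality $\Vert\widehat s_t(x,\cdot)\Vert_{L^q(M)}\le c\Vert s_t(x,\cdot)\Vert_{L^q(X)}$, proved by isolating the finitely many $\gamma$ with $d(x,\gamma y)\le N_0$ and using $\delta(\Gamma)<\rho_m$ for the tail; your proposal mentions the summation over $\Gamma$ only in passing and does not supply this step. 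Until the $A_q$-interpolation (or an equivalent substitute) and the unfolding estimate are in place, the proof does not close.
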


Consider the following Cauchy problem for the linear inhomogeneous Schr\"{o}%
dinger equation on $M$: 
\begin{equation}
\left\{ 
\begin{array}{l}
i\partial _{t}u(t,x)+\Delta u(t,x)=F(t,x), \\ 
u(0,x)=f(x).%
\end{array}%
\right.  \label{ilcp}
\end{equation}

Combining the above dispersive estimate with the classical $TT^{\ast }$
method developed by Kato \cite{KA}, Ginibre and Velo \cite{GIVE} and Keel
and Tao \cite{KETA}, we obtain Strichartz estimates for the solutions $%
u(t,x) $\ of (\ref{ilcp}). Consider the triangle

\begin{equation}
T_{n}=\left\{ \left( \tfrac{1}{p},\tfrac{1}{q}\right) \in \left( 0,\tfrac{1}{%
2}\right] \times \left( 0,\tfrac{1}{2}\right) :\tfrac{2}{p}+\tfrac{n}{q}\geq 
\tfrac{n}{2}\right\} \cup \left\{ \left( 0,\tfrac{1}{2}\right) \right\} .
\label{tr0}
\end{equation}

\begin{theorem}
\label{diest140}Assume that $M\in (S_{0})$. If $\left( p,q\right) $ and $%
\left( \tilde{p},\tilde{q}\right) $ are admissible pairs in the triangle $%
T_{n}$, then there exists a constant $c>0$ such that the following
Strichartz estimate holds for the solutions $u\left( t,x\right) $ of the
Cauchy problem (\ref{ilcp}): 
\begin{equation}
\Vert u\Vert _{L_{t}^{p}L_{x}^{q}}\leq c\left\{ \Vert f\Vert
_{L_{x}^{2}}+\Vert F\Vert _{L_{t}^{\tilde{p}^{\prime }}L_{x}^{\tilde{q}%
^{\prime }}}\right\} .  \label{stri0}
\end{equation}
\end{theorem}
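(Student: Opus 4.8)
The plan is to derive the Strichartz estimates \eqref{stri0} from the dispersive bound of Theorem \ref{diest13} by the abstract $TT^{\ast}$ argument of Keel and Tao \cite{KETA}, treating the regimes $|t|<1$ and $|t|\geq 1$ separately and then patching. Write $\widehat{S}_t = e^{it\Delta}$ and recall from Theorem \ref{diest13} that $\widehat{S}_t$ is uniformly bounded on $L^2(M)$ (unitary, in fact) and satisfies, for $q\in(2,\infty]$ with $\widetilde q = q$,
\begin{equation*}
\Vert \widehat{S}_t \Vert_{L^{q'}(M)\to L^q(M)} \leq c\,\min\bigl(|t|,1\bigr)^{-n(1/2-1/q)}\,\bigl(1+|t|\bigr)^{-\sigma(1/2-1/q)^{0}},
\end{equation*}
more precisely the small-time factor $|t|^{-n(1/2-1/q)}$ for $|t|<1$ and the uniform large-time decay $|t|^{-3/2}$ (rank one) or $|t|^{-n/2}$ ($G$ complex) for $|t|\geq 1$. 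The key point is that the large-time decay is \emph{at least} as strong as the small-time power $|t|^{-n(1/2-1/q)}$ would be if continued (since $n(1/2-1/q)<n/2$, and $3/2\geq$ the relevant exponent once $n\geq 3$; the low-dimensional rank one cases are handled by the $|t|^{-3/2}$ bound directly), so globally one has $\Vert \widehat{S}_t\Vert_{L^{q'}\to L^q}\leq c\,|t|^{-n(1/2-1/q)}$ for \emph{all} $t\neq 0$ when $(1/p,1/q)$ lies on the line $2/p+n/q=n/2$, and a better (integrable at infinity) bound off that line.

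Next I would feed this into the Keel--Tao machinery. Set $U(t)=\widehat{S}_t$; it satisfies the energy estimate $\Vert U(t)f\Vert_{L^2}\leq c\Vert f\Vert_{L^2}$ and the untruncated decay estimate $\Vert U(t)U(s)^{\ast}\Vert_{L^{q'}\to L^q}\leq c\,|t-s|^{-n(1/2-1/q)}$ along the admissible line. By the abstract Theorem 1.2 of \cite{KETA}, with $\sigma = n(1/2-1/q)$ playing the role of the decay exponent, one obtains for every \emph{sharp admissible} pair, i.e. $(1/p,1/q)$ with $2/p+n/q=n/2$, $p\geq 2$, $(p,q,n)\neq(2,\infty,2)$, the homogeneous estimate $\Vert U(t)f\Vert_{L^p_tL^q_x}\leq c\Vert f\Vert_{L^2}$, the dual estimate $\Vert \int U(s)^{\ast}F(s)\,ds\Vert_{L^2}\leq c\Vert F\Vert_{L^{\widetilde p'}_t L^{\widetilde q'}_x}$, and the retarded estimate
\begin{equation*}
\Bigl\Vert \int_{s<t} U(t)U(s)^{\ast}F(s)\,ds \Bigr\Vert_{L^p_tL^q_x}\leq c\,\Vert F\Vert_{L^{\widetilde p'}_t L^{\widetilde q'}_x},
\end{equation*}
including the endpoint $p=2$ when $n\geq 3$. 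Duhamel's formula $u(t,x)=U(t)f(x)-i\int_0^t U(t-s)F(s,\cdot)\,ds$ then gives \eqref{stri0} for all sharp admissible pairs, which are exactly the pairs on the hypotenuse of $T_n$ together with the corner $(0,1/2)$. To reach the full (non-sharp) range $2/p+n/q\geq n/2$ inside $T_n$, I would combine the sharp estimate with the uniform $L^2$ boundedness by interpolation, or equivalently apply the extra off-line decay noted above: for $(1/p,1/q)$ strictly below the line the relevant $U(t)U(s)^{\ast}$ bound decays faster than integrably at infinity and like $|t-s|^{-n(1/2-1/q)}$ near the diagonal, so the same $TT^{\ast}$ argument — now not even needing the endpoint refinement — closes the estimate. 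Mixed pairs $(p,q)\neq(\widetilde p,\widetilde q)$ are handled by the standard Christ--Kiselev lemma together with the bilinear form of Keel--Tao, exactly as in the Euclidean and hyperbolic cases \cite{ANPI,ANPIVA,IONSTA}.

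The only genuinely delicate point is verifying that Theorem \ref{diest13} supplies the decay hypothesis of \cite{KETA} \emph{globally in $t$ with a single exponent}, and in particular that the endpoint $p=2$ is admissible: here one must check that for $|t|\geq1$ the bound $|t|^{-3/2}$ (rank one) respectively $|t|^{-n/2}$ ($G$ complex) dominates the near-diagonal rate $|t|^{-n(1/2-1/q)}$ so that no loss occurs when one replaces the piecewise bound by the uniform power, and that for $n=2$ (which can only occur in the complex case, where the large-time rate is $|t|^{-1}$) one simply excludes the endpoint, matching the definition of $T_n$ via the strict inequality $1/q<1/2$. Everything else is a routine transcription of the Euclidean $TT^{\ast}$ scheme, since $M$ is a complete Riemannian manifold and $U(t)$ is a unitary group on $L^2(M)$; no use of the geometry of $M$ beyond the kernel estimates of Theorem \ref{diest13} is needed. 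I would therefore expect the proof to be short, with the bulk of the work being the bookkeeping that identifies the admissible region with $T_n$.
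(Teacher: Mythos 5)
Your proposal is correct and rests on the same two pillars as the paper's proof --- the dispersive estimates of Theorem \ref{diest13} and the classical $TT^{\ast}$ method via Duhamel's formula --- but the middle of the argument is organized genuinely differently. The paper (following \cite{ANPI,ANPIVA}) never isolates the sharp admissible line: after Minkowski's inequality it splits the time integral into $|t-s|\geq 1$ and $|t-s|\leq 1$ and bounds the two pieces by the one-dimensional convolution operators with kernels $k_{1}(u)=|u|^{-3/2}\chi _{\{|u|\geq 1\}}$ and $k_{2}(u)=|u|^{-n(1/2-1/q)}\chi _{\{|u|\leq 1\}}$; since $k_{1}$ lies in every $L^{r}(\mathbb{R})$, the far piece is bounded for essentially the whole square of exponents, and it is the near piece alone that carves out the triangle $T_{n}$ --- this is exactly why the admissible set here is two-dimensional rather than a line. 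You instead first collapse the piecewise decay to the single global power $|t|^{-n(1/2-1/q)}=|t|^{-2/p}$ on the hypotenuse (a correct observation, since $2/p\leq 1\leq \min (3/2,n/2)$ there, which the paper does not make explicit), invoke the abstract Keel--Tao theorem to get the hypotenuse including the endpoint $p=2$, and then interpolate with the trivial $L_{t}^{\infty }L_{x}^{2}$ bound toward the vertex $(0,1/2)$ to fill $T_{n}$, with Christ--Kiselev handling the retarded estimates for mixed pairs. Both routes work; yours leans more heavily on the endpoint machinery of \cite{KETA} (without which the interpolation reaches only the open triangle), whereas the paper's splitting reaches the interior of $T_{n}$ with nothing beyond Young's inequality and, as written, restricts to open intervals and defers the boundary to the references --- a point your scheme actually handles more cleanly. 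One caveat worth fixing: Theorem 1.2 of \cite{KETA} is stated for $L^{1}\rightarrow L^{\infty }$ decay, which here fails globally in time in the rank one case for $n>3$ (the large-time rate is only $|t|^{-3/2}$); what you need, and implicitly use, is the two-Banach-space $L^{q^{\prime }}\rightarrow L^{q}$ variant of their argument, as in \cite{ANPIVA}.
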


As it is noticed in \cite{ANPI,ANPIVA} the above set $T_{n}$ of admissible
pairs is much wider that the admissible set in the case of $\mathbb{R}^{n}$
which is just the lower edge of the triangle. This phenomenon was already
observed for hyperbolic spaces in \cite{BACASTA, IONSTA}.

The paper is organised as follows. In Section 2 we describe the geometric
context of symmetric spaces and we present the spherical Fourier transform.
In Section \ref{KSPH} we present an analogue of Kunze and Stein phenomenon
for convolution operators on a class of locally symmetric spaces proved in 
\cite{LOMAjga} by Lohou\'{e} and Marias. In Section \ref{kern} we prove
pointwise estimates of the Schr\"{o}dinger kernel on symmetric spaces $X=G/K$
when $G$\ is complex. In Section \ref{SCK} we study the Schr\"{o}dinger
operator on $M$ and we prove norm estimates for its kernel.\ In Section \ref%
{diest} we prove dispersive estimates for the Schr\"{o}dinger operator on $M$
and we give the proofs of Theorems \ref{diest13} and \ref{diest140}.
Finally, we apply Strichartz estimates to study well-posedness and
scattering for NLS equations.

\section{Preliminaries}

In this section we shall recall some basic facts about symmetric spaces of
noncompact type. For more details see \cite{HEL,AN,ANLO,ION1,ION2,LOMAjga}.

Let $A$ be the analytic subgroup of $G$ with Lie algebra $\mathfrak{a}$. Let 
$\mathfrak{a}^{+}\subset \mathfrak{a}$ be a positive Weyl chamber and let $%
\overline{\mathfrak{a}_{+}}$ be its closure. Put $A^{+}=\exp \mathfrak{a}_{+}
$. Its closure in $G$ is $\overline{A^{+}}=\exp \overline{\mathfrak{a}_{+}}$%
. We have the Cartan decomposition 
\begin{equation*}
G=K(\overline{A^{+}})K=K(\exp \overline{\mathfrak{a}_{+}})K.
\end{equation*}%
Let $k_{1}$, $k_{2}$ and $\exp H$ be the components of $g\in G$ in $K$ and $%
\exp \mathfrak{a}$ according to the Cartan decomposition. Then $g$ is
written as $g=k_{1}\left( \exp H\right) k_{2}$. According to Cartan
decomposition, the Haar measure on $G$ is written as 
\begin{equation}
\int_{G}f\left( g\right) dg=c\int_{K}dk_{1}\int_{\mathfrak{a}^{+}}\delta
\left( H\right) dH\int_{K}f\left( k_{1}\left( \exp H\right) k_{2}\right)
dk_{2},  \label{mes}
\end{equation}%
where 
\begin{equation*}
\delta \left( H\right) =\prod_{\alpha \in \Sigma ^{+}}\sinh ^{m_{\alpha
}}\alpha \left( H\right) ,
\end{equation*}%
with $m_{\alpha }=\dim \mathfrak{g}_{\alpha }$, and 
\begin{equation*}
\mathfrak{g}_{\alpha }=\left\{ X\in \mathfrak{g}:\left[ H,X\right] =\alpha
\left( H\right) X\text{ for all }H\in \mathfrak{a}\right\} ,
\end{equation*}%
is the root space associated to the root $\alpha \in \Sigma ^{+}$. Note that 
\begin{equation}
\delta \left( H\right) \leq ce^{2\rho \left( H\right) }\text{, }\ \ H\in 
\mathfrak{a}_{+}.  \label{mod}
\end{equation}

If $G$ has real rank one, then it is well known \cite{ION1} that the root
system $\Sigma $ is either of the form $\left\{ -\alpha ,\alpha \right\} $
or of the form $\left\{ -\alpha ,-2\alpha ,\alpha ,2\alpha \right\} $. Thus $%
\rho =\alpha /2$ or $\rho =\left( 3/2\right) \alpha $. Thus $\rho
_{m}=\min_{H\in \overline{\mathfrak{a}_{+}},\text{ }\left\vert H\right\vert
=1}\rho \left( H\right) =\rho $.

Let $H_{0}$ be the unique element of $\mathfrak{a}$ with the property that $%
\alpha (H_{0})=1$. Set $a(s)=\exp (sH_{0})$, $s\in \mathbb{R}.$ Then $a:%
\mathbb{R}\longrightarrow A$ is a diffeomorphism and we identify $A=\exp 
\mathfrak{a}$ with $\mathbb{R}$ via $a$. We also normalize the Killing form
on $\mathfrak{g}$ such that 
\begin{equation*}
d(a(s).\mathbf{0},\mathbf{0})=|s|,\text{ \ for all }s\in \mathbb{R},
\end{equation*}%
where $\mathbf{0}=\left\{ K\right\} $ is the origin of $X$.

In this case we have that $A_{+}=\left\{ a(s):s\geq 0\right\} \simeq \mathbb{%
R}^{+}$ and any $K$-bi-invariant function $\kappa $ is identified with the
function $\kappa :\mathbb{R}_{+}\longrightarrow \mathbb{C}$, given by $%
\kappa (s)=\kappa (a(s).\mathbf{0})$. So, if $\kappa $ is $K$-bi-invariant,
then from (\ref{mes}) one has 
\begin{equation}
\int_{G}\kappa \left( g\right) dg\leq c\int_{\mathbb{R}^{+}}\kappa \left(
s\right) e^{2\left\vert \rho \right\vert s}ds.  \label{cart2}
\end{equation}

\subsection{The spherical Fourier transform}

Denote by $S\left( K\backslash G/K\right) $ the Schwartz space of $K$%
-bi-invariant functions on $G$. The spherical Fourier transform $\mathcal{H}$%
\ is defined by 
\begin{equation*}
\mathcal{H}f\left( \lambda \right) =\int_{G}f\left( x\right) \varphi
_{\lambda }\left( x\right) dx,\text{ \ }\lambda \in \mathfrak{a}^{\ast },\ \
f\in S\left( K\backslash G/K\right) ,
\end{equation*}%
where $\varphi _{\lambda }\left( x\right) $ are the elementary spherical
functions on $G$, which by a theorem of Harish-Chandra, \cite[p.418]{HEL},
are given by 
\begin{equation}
\varphi _{\lambda }\left( x\right) =\int_{K}e^{\left( \pm i\lambda +\rho
\right) \left( H\left( x^{\pm }k\right) \right) }dk\text{, \ }x\in G\text{,
\ }\lambda \in \mathfrak{a}_{\mathbb{C}}^{\ast }.  \label{phi0}
\end{equation}

Let $S\left( \mathfrak{a}^{\ast }\right) $ be the usual Schwartz space on $%
\mathfrak{a}^{\ast }$, and let us denote by $S\left( \mathfrak{a}^{\ast
}\right) ^{W}$ the subspace of $W$-invariants in $S\left( \mathfrak{a}^{\ast
}\right) $. Then, by a celebrated theorem of Harish-Chandra, $\mathcal{H}$
is an isomorphism between $S\left( K\backslash G/K\right) $ and $S\left( 
\mathfrak{a}^{\ast }\right) ^{W}$ and its inverse is given by 
\begin{equation}
\left( \mathcal{H}^{-1}f\right) \left( x\right) =c\int_{\mathfrak{a}^{\ast
}}f\left( \lambda \right) \varphi _{-\lambda }\left( x\right) \frac{d\lambda 
}{\left\vert \mathbf{c}\left( \lambda \right) \right\vert ^{2}},\ \ x\in G,\
f\in S\left( \mathfrak{a}^{\ast }\right) ^{W},  \label{invtran}
\end{equation}%
where $\mathbf{c}\left( \lambda \right) $ is the Harish-Chandra function.

Recall that the Schr\"{o}dinger kernel $s_{t}$ on $X$ is given by 
\begin{equation*}
s_{t}(\exp H)=\left( \mathcal{H}^{-1}w_{t}\right) \left( \exp H\right) \text{%
, \ }H\in \overline{\mathfrak{a}_{+}}
\end{equation*}%
where 
\begin{equation*}
w_{t}\left( \lambda \right) =e^{it\left\vert \rho \right\vert
^{2}}e^{it\left\vert \lambda \right\vert ^{2}},\text{ \ }\lambda \in 
\mathfrak{p}^{\ast }.
\end{equation*}

So, to obtain pointwise estimates of the kernel $s_{t}$ which are crucial
for our proofs, we need a manipulable expression of the inverse spherical
Fourier transform $\mathcal{H}^{-1}$. This happens exactly in the two cases
we deal with in the present work: the rank one case and the case $G$
complex. The case of rank one symmetric spaces is treated in \cite{ANPIVA}.
In Section \ref{kern} we treat the case $G$ complex by exploiting the fact
that the spherical Fourier transform boils down to the Euclidean Fourier
transform on $\mathfrak{p}$. More precisely, the inverse spherical Fourier
transform is given by the following formula: 
\begin{equation}
\mathcal{H}^{-1}f\left( \exp H\right) =c\varphi _{0}\left( \exp H\right)
\int_{\mathfrak{p}^{\ast }}f\left( \lambda \right) e^{i\lambda \left(
H\right) }d\lambda ,\text{ \ \ }H\in \mathfrak{p},  \label{cft}
\end{equation}%
where $\varphi _{0}$ is the basic spherical function \cite[p. 1312]{ANLO}.

\section{\label{KSPH}An analogue of Kunze-Stein's phenomenon on locally
symmetric spaces}

Let us recall that a central result in the theory of convolution operators
on semisimple Lie groups is the Kunze-Stein phenomenon, which states that if 
$p\in \lbrack 1,2)$, $f\in L^{2}(G)$ and $\kappa \in L^{p}(G)$, then 
\begin{equation}
||f\ast \kappa ||_{L^{2}(G)}\leq C\left( p\right) ||f||_{L^{2}(G)}||\kappa
||_{L^{p}(G)},  \label{ks}
\end{equation}%
(see \cite[p. 3361]{ION2}). This inequality was proved by Kunze and Stein 
\cite{KS} in the case when $G=SL(2,\mathbb{R})$ and by Cowling \cite{CO} in
the general case. In \cite{HE} Herz noticed that the inequality (\ref{ks})
can be sharpened if the kernel $\kappa $ is $K$-bi-invariant. Indeed, the
Herz's criterion \cite{HE} asserts that if $p\geq 1$ and $\kappa $ is a $K$%
-bi-invariant kernel, then 
\begin{eqnarray}
||\ast \left\vert \kappa \right\vert ||_{L^{p}(G)\rightarrow L^{p}(G)} &\leq
&C\int_{G}\left\vert \kappa (g)\right\vert \varphi _{-i\rho _{p}}(g)dg 
\notag \\
&=&C\int_{\mathfrak{a}_{+}}\left\vert \kappa (\exp H)\right\vert \varphi
_{-i\rho _{p}}(\exp H)\delta (H)dH,  \label{ks3}
\end{eqnarray}%
where $\rho _{p}=|2/p-1|\rho $, $p\geq 1$.

Note that for $p=2$, the best we can obtain in the Euclidean case is the
inequality 
\begin{equation*}
||\ast \left\vert \kappa \right\vert ||_{L^{2}(\mathbb{R}^{n})\rightarrow
L^{2}(\mathbb{R}^{n})}\leq ||\kappa ||_{L^{1}(\mathbb{R}^{n})},
\end{equation*}%
while in the semisimple case we have that 
\begin{equation}
||\ast \left\vert \kappa \right\vert ||_{L^{2}(G)\rightarrow L^{2}(G)}\leq
C\int_{\mathfrak{a}_{+}}\left\vert \kappa (\exp H)\right\vert \varphi
_{0}(\exp H)\delta (H)dH.  \label{ks000}
\end{equation}%
Bearing in mind that 
\begin{equation*}
||\kappa ||_{L^{1}(G)}=\int_{\mathfrak{a}_{+}}\left\vert \kappa (\exp
H)\right\vert \delta (H)dH,
\end{equation*}%
we deduce from the above norm estimates that for $p=2$, the non-trivial gain
over the Euclidean case is the factor $\varphi _{0}(\exp H)$.

Kunze-Stein's phenomenon is no more valid on locally symmetric spaces $%
M=\Gamma \backslash G/K$. In \cite{LOMAjga} Lohou\'{e} et Marias proved an
analogue of this phenomenon for a class of locally symmetric spaces. More
precisely, let $\lambda _{0}$ be the bottom of the $L^{2}$-spectrum of $%
-\Delta $ on $M$.\ We say that $M$ \textit{possesses property (KS)} if there
exists a vector $\eta _{\Gamma }$ on the Euclidean sphere $S\left( 0,\left(
\left\vert \rho \right\vert ^{2}-\lambda _{0}\right) ^{1/2}\right) $ of $%
\mathfrak{a}^{\ast }$, such that for all $p\in \left( 1,\infty \right) $,

\begin{equation}
\left\Vert \ast \left\vert \kappa \right\vert \right\Vert
_{L^{p}(M)\rightarrow L^{p}(M)}\leq \int_{G}\left\vert \kappa \left(
g\right) \right\vert \varphi _{-i\eta _{\Gamma }}\left( g\right) ^{s\left(
p\right) }dg,\text{ }  \label{loma1}
\end{equation}%
where 
\begin{equation}
s\left( p\right) =2\min \left( \left( 1/p\right) ,\left( 1/p^{\prime
}\right) \right) .  \label{loma2}
\end{equation}

In \cite{LOMAjga} it is shown that $M$ possesses property (KS)\textit{\ }if
it is contained in the following three classes:

(i). $\Gamma $ \textit{is a lattice }i.e. $\limfunc{vol}\left( \Gamma
\backslash G\right) <\infty $,

(ii). $G$ \textit{possesses Kazhdan's property (T)}. Recall that $G$ has
property (T)\ iff $G$ has no simple factors locally isomorphic to $SO(n,1)$
or $SU(n,1)$, \cite[ch. 2]{HAVA}. In this case $\Gamma \backslash G/K$
possesses property (KS) for all discrete subgroups $\Gamma $ of $G$. Recall
that $H^{n}\left( \mathbb{H}\right) =Sp\left( n,1\right) /Sp\left( n\right) $
and $H^{2}\left( \mathbb{O}\right) =F_{4}^{-20}/Spin\left( 9\right) $. So, $%
\Gamma \backslash H^{n}\left( \mathbb{H}\right) $ and $\Gamma \backslash
H^{2}\left( \mathbb{O}\right) $ have property (KS) for \textit{all} discrete
subgroups $\Gamma $ of $Sp\left( n,1\right) $ and $F_{4}^{-20}$ respectively.

Thus, from cases (i) and (ii) we deduce that \textit{all} locally symmetric
spaces $\Gamma \backslash H^{n}\left( \mathbb{H}\right) $ and $\Gamma
\backslash H^{2}\left( \mathbb{O}\right) $ have property (KS).

On the contrary, the isometry groups $SO(n,1)$ and $SU(n,1)$ of real and
complex hyperbolic spaces do not have property (T) and consequently the
quotients $\Gamma \backslash H^{n}\left( \mathbb{R}\right) $ and $\Gamma
\backslash H^{n}\left( \mathbb{C}\right) $ of infinite volume do not in
general belong in the class (ii). The class (iii) below covers also this
case.

(iii) $\Gamma \backslash G$ \textit{is non-amenable}. Note that since $G$ is
non-amenable, then $\Gamma \backslash G$ is non-amenable if $\Gamma $ is
amenable. So, if $\Gamma $ is amenable, then the quotients $\Gamma
\backslash H^{n}\left( \mathbb{R}\right) $ and $\Gamma \backslash
H^{n}\left( \mathbb{C}\right) $ possesses property (KS) even if they have
infinite volume. Note that if $\Gamma $ is finitely generated and has
subexponential growth, then $\Gamma $ is amenable, \cite{ADS,GR}.

Let us now explain when a finitely generated group $\Gamma $ has
subexponential growth. Let $A=\{a_{1},a_{2},...,a_{m}\}$ be a system of
generators of $\Gamma $. The length $|g|_{A}$ of $g\in \Gamma $ with respect
to $A$ is the length $n$ of the shortest representation of $g$ in the form $%
g=a_{i_{1}}^{\pm }a_{i_{2}}^{\pm }\cdots a_{i_{n}}^{\pm }$, $a_{i_{j}}\in A$%
. This depends on the set $A$ but, for any two systems of generators $A$ and 
$B$, $|g|_{A}$ and $|g|_{B}$ are equivalent. The growth function of $\Gamma $
with respect to the set $A$ is the function 
\begin{equation*}
\gamma _{\Gamma }^{A}\left( n\right) =\#\left\{ g\in G:|g|_{A}\leq n\right\}
,
\end{equation*}%
where $\#E$ denotes the cardinality of the set $E$. We say that $\Gamma $
has subexponential growth if $\gamma _{\Gamma }^{A}\left( n\right) $ grows
more slowly than any exponential function.

\section{\label{kern}Pointwise estimates of the Schr\"{o}dinger kernel on $X$%
}

As it is already mentioned, the Schr\"{o}dinger kernel $s_{t}$ on $X$ is
given by 
\begin{equation}
s_{t}(\exp H)=\left( \mathcal{H}^{-1}w_{t}\right) \left( \exp H\right) \text{%
, \ }H\in \overline{\mathfrak{a}_{+}},  \label{sft}
\end{equation}%
where 
\begin{equation*}
w_{t}\left( \lambda \right) =e^{it\left\vert \rho \right\vert
^{2}}e^{it\left\vert \lambda \right\vert ^{2}},\text{ \ }\lambda \in 
\mathfrak{p}^{\ast }.
\end{equation*}

Using (\ref{sft}) and the expression of the inverse spherical Fourier
transform $\mathcal{H}^{-1}$\ in the case of rank one symmetric spaces,
Anker, Pierfelice et Vallarino \cite{ANPIVA} obtained the following
estimates of $s_{t}$:

\begin{equation}
\left\vert s_{t}(r)\right\vert \leq c\psi _{1}\left( t,r\right)
e^{-\left\vert \rho \right\vert r},  \label{schroest11}
\end{equation}%
where 
\begin{equation}
\psi _{1}\left( t,r\right) =\left\{ 
\begin{array}{l}
|t|^{-n/2}(1+r)^{\left( n-1\right) /2},\text{ \ if \ }\left\vert
t\right\vert \leq 1+r, \\ 
|t|^{-3/2}(1+r),\text{ \ if \ }\left\vert t\right\vert >1+r.%
\end{array}%
\right.  \label{psi0}
\end{equation}%
(See also \cite{ANPI,MANcam} for the case of the real hyperbolic space).

\begin{lemma}
If $G$ is complex, then the Schr\"{o}dinger kernel $s_{t}$ on $X=G/K$ is
given by 
\begin{equation}
s_{t}\left( \exp H\right) =c\varphi _{0}\left( \exp H\right)
t^{-n/2}e^{it\left\vert \rho \right\vert ^{2}}e^{-i\left\vert H\right\vert
^{2}/4t}\text{, \ }H\in \overline{\mathfrak{a}_{+}},\text{ \ }t\in \mathbb{R}%
\text{, }  \label{ft01}
\end{equation}%
where $n=$ $\dim \mathfrak{p}=\dim X$.
\end{lemma}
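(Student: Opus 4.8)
The plan is to use formula \eqref{cft}, which expresses the inverse spherical Fourier transform on a complex semisimple Lie group as a Euclidean Fourier transform on $\mathfrak{p}^{\ast}$ twisted by the basic spherical function $\varphi_{0}$. By \eqref{sft} and \eqref{cft} we have
\begin{equation*}
s_{t}\left( \exp H\right) =c\,\varphi _{0}\left( \exp H\right) \int_{\mathfrak{p}^{\ast }}e^{it\left\vert \rho \right\vert ^{2}}e^{it\left\vert \lambda \right\vert ^{2}}e^{i\lambda \left( H\right) }\,d\lambda ,\qquad H\in \mathfrak{p},
\end{equation*}
so that everything reduces to evaluating the Gaussian-type oscillatory integral $\int_{\mathfrak{p}^{\ast }}e^{it\left\vert \lambda \right\vert ^{2}}e^{i\lambda \left( H\right) }\,d\lambda $ over $\mathfrak{p}^{\ast }\cong \mathbb{R}^{n}$.

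First I would factor out the constant $e^{it\left\vert \rho \right\vert ^{2}}$, which is independent of $\lambda$. Then I would complete the square in the exponent: writing $\left\vert \lambda \right\vert ^{2}$ for the Euclidean norm on $\mathfrak{p}^{\ast }$ (dual to the normalized Killing metric on $\mathfrak{p}$) and using the identification of $H\in \mathfrak{p}$ with its image in $\mathfrak{p}^{\ast }$, we have
\begin{equation*}
it\left\vert \lambda \right\vert ^{2}+i\lambda \left( H\right) =it\left\vert \lambda +\tfrac{H}{2t}\right\vert ^{2}-\tfrac{i\left\vert H\right\vert ^{2}}{4t}.
\end{equation*}
After the translation $\lambda \mapsto \lambda - H/(2t)$ in the integral, the remaining factor is the standard Fresnel integral $\int_{\mathbb{R}^{n}}e^{it\left\vert \lambda \right\vert ^{2}}\,d\lambda $, which equals $c\,t^{-n/2}$ with an explicit constant (a power of $\pi$ times $e^{\pm i\pi n/4}$ according to $\operatorname{sgn}t$); I would absorb this constant into $c$. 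Collecting the pieces yields exactly \eqref{ft01}.

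The one genuine subtlety — the main obstacle, though a mild one — is justifying the manipulations for real $t$: the integrand $e^{it\left\vert \lambda \right\vert ^{2}+i\lambda(H)}$ is not absolutely integrable on $\mathfrak{p}^{\ast}$, so \eqref{cft} and the Fresnel evaluation must be read in the appropriate regularized sense. The clean way is to run the computation first for complex time $t$ with $\operatorname{Im}t>0$, where the Gaussian $e^{it\left\vert \lambda \right\vert ^{2}}$ decays and Fubini, the change of variables, and the classical formula $\int_{\mathbb{R}^{n}}e^{it\left\vert \lambda \right\vert ^{2}}\,d\lambda =(\pi /(-it))^{n/2}$ all apply verbatim; then extend to real $t\neq 0$ by analytic continuation (both sides of \eqref{ft01} are holomorphic in $t$ in the open upper half-plane and continuous up to the real axis away from $t=0$, for $H$ fixed). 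Alternatively one invokes the standard tempered-distribution identity for the Schr\"{o}dinger kernel on $\mathbb{R}^{n}$ directly. Either route is routine; the substantive content of the lemma is simply the reduction \eqref{cft}, which we are entitled to use, together with the elementary Gaussian integral.
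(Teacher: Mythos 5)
Your argument is correct and follows essentially the same route as the paper: both start from the reduction \eqref{cft} of the inverse spherical transform to a Euclidean Fourier integral twisted by $\varphi_{0}$, and then evaluate the resulting Gaussian oscillatory integral $\int_{\mathfrak{p}^{\ast}}e^{it|\lambda|^{2}}e^{i\lambda(H)}\,d\lambda$. The only (inessential) difference is in how that elementary integral is handled --- the paper factors it into one-dimensional Fresnel integrals and rotates the contour to the diagonal $e^{i\pi/4}\mathbb{R}$, whereas you complete the square in $n$ variables and justify the formula by analytic continuation from $\operatorname{Im}t>0$; your explicit remark that the integrand is not absolutely integrable and needs a regularized reading is a point the paper passes over silently.
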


\begin{proof}
As it is already mentioned, if $G$ is complex, then the inverse spherical
Fourier transform is given by 
\begin{equation}
\mathcal{H}^{-1}f\left( \exp H\right) =c\varphi _{0}\left( \exp H\right)
\int_{\mathfrak{p}^{\ast }}f\left( \lambda \right) e^{i\lambda \left(
H\right) }d\lambda ,\text{ \ \ }H\in \mathfrak{p},  \label{ft00}
\end{equation}%
where $\varphi _{0}$ is the basic spherical function \cite[p. 1312]{ANLO}.
From (\ref{ft00}) and (\ref{sft}) we get that 
\begin{equation*}
s_{t}\left( \exp H\right) =c\varphi _{0}\left( \exp H\right) \int_{\mathfrak{%
p}^{\ast }}e^{it\left( \left\vert \rho \right\vert ^{2}+\left\vert \lambda
\right\vert ^{2}\right) }e^{i\lambda \left( H\right) }d\lambda ,\text{ \ }%
H\in \mathfrak{p.}
\end{equation*}%
Write 
\begin{equation*}
e^{it\left( \left\vert \rho \right\vert ^{2}+\left\vert \lambda \right\vert
^{2}\right) }e^{i\lambda \left( H\right) }=e^{it\left\vert \rho \right\vert
^{2}}\dprod\limits_{j\leq n}e^{i\lambda _{j}^{2}}e^{i\lambda _{j}H_{j}},
\end{equation*}%
and note that the function $\lambda \longrightarrow g\left( \lambda \right)
=e^{i\lambda ^{2}}e^{i\lambda H}$, $\lambda \in \mathbb{C}$, is analytic.
So, we can compute the integral 
\begin{equation*}
I\left( t,H\right) :=\int_{\mathbb{R}}e^{i\lambda ^{2}}e^{i\lambda
H}d\lambda 
\end{equation*}%
by changing the path of integration. In fact, we shall integrate on the
first diagonal $\gamma \left( \lambda \right) =e^{i\pi /4}\lambda $, $%
\lambda \in \mathbb{R}$, and get that 
\begin{eqnarray*}
I\left( t,H\right)  &=&\int_{\mathbb{R}}e^{ite^{i\left( \pi /2\right)
}\lambda ^{2}}e^{ie^{i\left( \pi /4\right) }\lambda H}d\lambda =\int_{%
\mathbb{R}}e^{-t\lambda ^{2}}e^{i\lambda \left( e^{i\left( \pi /4\right)
}H\right) }d\lambda  \\
&=&\left( 4\pi t\right) ^{-1/2}e^{-\left( e^{i\left( \pi /4\right) }H\right)
^{2}/4t}=\left( 4\pi t\right) ^{-1/2}e^{-iH^{2}/4t},
\end{eqnarray*}%
and (\ref{ft01}) follows by bearing in mind that $s_{t}$ is $K$-bi-invariant.
\end{proof}

\begin{proposition}
\label{comest}If $G$ is complex, then the Schr\"{o}dinger kernel $s_{t}$ on $%
X=G/K$ satisfies the following estimates 
\begin{equation}
\left\vert s_{t}\left( \exp H\right) \right\vert \leq ct^{-n/2}\left(
1+\left\vert H\right\vert \right) ^{\alpha }e^{-\rho _{m}\left\vert
H\right\vert }\text{, \ }H\in \overline{\mathfrak{a}_{+}},\text{ \ }t\in 
\mathbb{R}\text{,}  \label{ft03}
\end{equation}%
for some constants $c$, $a>0$.
\end{proposition}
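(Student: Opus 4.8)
The plan is to read the bound off the closed expression for $s_{t}$ obtained in the preceding Lemma, combined with the classical exponential decay of the basic spherical function $\varphi _{0}$. First I would take absolute values in (\ref{ft01}): by (\ref{phi0}) with $\lambda =0$ the function $\varphi _{0}$ is real and strictly positive, while $|e^{it\left\vert \rho \right\vert ^{2}}|=|e^{-i\left\vert H\right\vert ^{2}/4t}|=1$, so that
\[
\left\vert s_{t}(\exp H)\right\vert =c\,|t|^{-n/2}\,\varphi _{0}(\exp H),\qquad H\in \overline{\mathfrak{a}_{+}},\ t\in \mathbb{R}\setminus \{0\}.
\]
Hence the statement reduces to a pointwise upper bound for $\varphi _{0}(\exp H)$ alone, uniform over $H\in \overline{\mathfrak{a}_{+}}$.

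The input I would use is the well-known estimate
\[
0<\varphi _{0}(\exp H)\le c\,(1+\left\vert H\right\vert )^{\alpha }\,e^{-\rho (H)},\qquad H\in \overline{\mathfrak{a}_{+}},
\]
valid on any symmetric space of noncompact type for suitable constants $c,\alpha >0$. In the complex case it is an elementary consequence of the explicit Harish-Chandra formula for $\varphi _{0}$ (the one underlying the preceding Lemma), which up to a constant presents $\varphi _{0}(\exp H)$ as $\prod_{\beta \in \Sigma ^{+}}\beta (H)$ divided by the alternating exponential sum $\sum_{w\in W}(\det w)\,e^{(w\rho )(H)}$: the product of positive-root factors is exactly what compensates the near-cancellations of that sum along the walls of $\overline{\mathfrak{a}_{+}}$, while deep inside the chamber the sum is comparable to $e^{\rho (H)}$, and the quotient obeys the claimed bound. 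In any event the estimate can be quoted from the literature.

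It remains to pass from $e^{-\rho (H)}$ to $e^{-\rho _{m}\left\vert H\right\vert }$, which is immediate from the definition of $\rho _{m}$. For $H\in \overline{\mathfrak{a}_{+}}$ with $H\neq 0$, write $H=\left\vert H\right\vert \omega $ with $\omega \in \overline{\mathfrak{a}_{+}}$, $\left\vert \omega \right\vert =1$; then $\rho (H)=\left\vert H\right\vert \rho (\omega )\ge \rho _{m}\left\vert H\right\vert $, and hence $e^{-\rho (H)}\le e^{-\rho _{m}\left\vert H\right\vert }$. Combining the three displays yields
\[
\left\vert s_{t}(\exp H)\right\vert \le c\,|t|^{-n/2}(1+\left\vert H\right\vert )^{\alpha }e^{-\rho _{m}\left\vert H\right\vert },
\]
which is (\ref{ft03}). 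There is no genuine obstacle in the argument: the only ingredient beyond bookkeeping is the spherical-function estimate of the middle step, and if one insists on deriving that from the explicit complex formula rather than citing it, the one point demanding care is its uniformity up to the walls of the Weyl chamber $\overline{\mathfrak{a}_{+}}$.
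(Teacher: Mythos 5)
Your proof is correct and follows essentially the same route as the paper: take absolute values in the explicit formula (\ref{ft01}), invoke the standard bound $\varphi _{0}(\exp H)\leq c(1+|H|)^{a}e^{-\rho (H)}$, and pass to $e^{-\rho _{m}|H|}$ via the definition of $\rho _{m}$. The only difference is that you spell out the last reduction and sketch an optional derivation of the spherical-function estimate, whereas the paper simply cites it.
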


\begin{proof}
Recall that for $H\in \overline{\mathfrak{a}_{+}}$, 
\begin{equation}
\varphi _{0}\left( \exp H\right) \leq c\left( 1+\left\vert H\right\vert
\right) ^{a}e^{-\rho \left( H\right) },  \label{phi00}
\end{equation}%
for some constants $c$, $a>0$, \cite{HEL}. Combining (\ref{phi00}) with (\ref%
{ft01}), we get 
\begin{eqnarray*}
\left\vert s_{t}\left( \exp H\right) \right\vert  &=&c\varphi _{0}\left(
\exp H\right) \leq c\left( 1+\left\vert H\right\vert \right) ^{a}e^{-\rho
\left( H\right) } \\
&\leq &ct^{-n/2}\left( 1+\left\vert H\right\vert \right) ^{a}e^{-\rho
_{m}\left\vert H\right\vert }.
\end{eqnarray*}
\end{proof}

\begin{remark}
As it is already mentioned, if $X$ has rank one, then $\left\vert \rho
\right\vert =\rho _{m}$. Also, in both cases, $s_{t}$ is $K$-bi-invariant.
So. the estimates (\ref{schroest11}) and (\ref{ft03}) of $s_{t}$ can be
written in the following form: 
\begin{equation}
\left\vert s_{t}\left( \exp H\right) \right\vert \leq c\psi \left(
t,H\right) e^{-\rho _{m}\left\vert H\right\vert },\text{ \ }H\in \overline{%
\mathfrak{a}_{+}},\text{ \ }t\in \mathbb{R}\text{,}  \label{kest1}
\end{equation}%
where 
\begin{equation}
\psi \left( t,H\right) =\left\{ 
\begin{array}{l}
\psi _{1}\left( t,H\right) \text{, \ if }M\text{ has rank one,} \\ 
\psi _{2}\left( t,H\right) :=t^{-n/2}\left( 1+\left\vert H\right\vert
\right) ^{a}\text{, \ if }G\text{ is complex,}%
\end{array}%
\right.  \label{psi2}
\end{equation}%
and $\psi _{1}$ is defined in (\ref{psi0}).
\end{remark}

\section{\label{SCK}Norm estimates of the Schr\"{o}dinger kernel on $M$}

Recall that the Schr\"{o}dinger operator $\widehat{S}_{t}$ on $M$ is
initially defined as a convolution operator

\begin{equation}
\widehat{S}_{t}f(x)=\int_{G}s_{t}(y^{-1}x)f(y)dy,\text{ \ \ }f\in
C_{0}^{\infty }(M).  \label{schroop1}
\end{equation}

Set $s_{t}(x,y)=s_{t}(y^{-1}x)$ and 
\begin{equation}
\widehat{s}_{t}\left( x,y\right) =\sum_{\gamma \in \Gamma }s_{t}\left(
x,\gamma y\right) =\sum_{\gamma \in \Gamma }s_{t}\left( \left( \gamma
y\right) ^{-1}x\right) .  \label{kerco}
\end{equation}

\begin{proposition}
\label{kest0}For all groups $\Gamma $ with $\delta \left( \Gamma \right)
<\rho _{m}$, the series (\ref{kerco}) converges and the Schr\"{o}dinger
operator $\widehat{S}_{t}$ on $M$ is given by 
\begin{equation}
\widehat{S}_{t}f\left( x\right) =\int_{M}\widehat{s}_{t}\left( x,y\right)
f\left( y\right) dy.  \label{kerco1}
\end{equation}
\end{proposition}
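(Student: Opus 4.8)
The plan is to prove convergence of the series $\widehat{s}_{t}(x,y)=\sum_{\gamma\in\Gamma}s_{t}(x,\gamma y)$ by dominating it with a Poincar\'{e} series, and then to justify the integral representation \eqref{kerco1} by interchanging summation and integration. First I would fix $x,y$ in a fundamental domain and use the pointwise kernel estimate \eqref{kest1}, namely $\left\vert s_{t}(\exp H)\right\vert\leq c\,\psi(t,H)\,e^{-\rho_{m}\left\vert H\right\vert}$, applied with $\exp H$ the Cartan component of $(\gamma y)^{-1}x$, so that $\left\vert H\right\vert=d(x,\gamma y)$. The function $\psi(t,H)$ grows at most polynomially in $\left\vert H\right\vert$ (in both the rank one case and the complex case, by \eqref{psi0} and \eqref{psi2}), so for fixed $t$ we get $\left\vert s_{t}(x,\gamma y)\right\vert\leq c_{t}\,(1+d(x,\gamma y))^{N}e^{-\rho_{m}d(x,\gamma y)}$ for some $N$ depending only on the space.

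Next I would absorb the polynomial factor into the exponential: since $\delta(\Gamma)<\rho_{m}$, choose $\alpha$ with $\delta(\Gamma)<\alpha<\rho_{m}$; then $(1+r)^{N}e^{-\rho_{m}r}\leq c\,e^{-\alpha r}$ for all $r\geq 0$, whence
\begin{equation*}
\sum_{\gamma\in\Gamma}\left\vert s_{t}(x,\gamma y)\right\vert\leq c_{t}\sum_{\gamma\in\Gamma}e^{-\alpha d(x,\gamma y)}=c_{t}\,P_{\alpha}(x,y)<\infty,
\end{equation*}
by the definition of the critical exponent $\delta(\Gamma)$. This gives absolute (and, with a little care, locally uniform) convergence of the defining series, so $\widehat{s}_{t}(x,y)$ is a well-defined $\Gamma$-invariant (in each variable, suitably) function descending to $M\times M$.

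For the integral representation, I would start from the defining formula \eqref{schroop1}, $\widehat{S}_{t}f(x)=\int_{G}s_{t}(y^{-1}x)f(y)\,dy$ with $f\in C_{0}^{\infty}(M)$ viewed as a $\Gamma$-invariant function on $X$, unfold the integral over $G/K=X$ as an integral over the fundamental domain against the sum over $\Gamma$-translates, i.e. write $\int_{X}=\sum_{\gamma\in\Gamma}\int_{\mathcal{F}}$ after replacing $y$ by $\gamma y$ and using $f(\gamma y)=f(y)$, and then interchange the sum and the integral. The interchange is legitimate by Fubini--Tonelli once I know $\int_{\mathcal{F}}\sum_{\gamma}\left\vert s_{t}(x,\gamma y)\right\vert\,\left\vert f(y)\right\vert\,dy<\infty$, which follows from the bound $\sum_{\gamma}\left\vert s_{t}(x,\gamma y)\right\vert\leq c_{t}P_{\alpha}(x,y)$ together with the compact support and boundedness of $f$ and the local boundedness of $P_{\alpha}(x,y)$ in $y$; collecting terms yields exactly \eqref{kerco1}.

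The main obstacle is the bookkeeping in the unfolding step: one must be careful that $d(x,\gamma y)$ is measured correctly as the Riemannian distance in $X$ between the lifts (so that $\left\vert H\right\vert$ in the Cartan decomposition of $(\gamma y)^{-1}x$ really equals $d(x,\gamma y)$), and that the polynomial factor $\psi(t,H)$ is genuinely subexponential so that the gap $\rho_{m}-\delta(\Gamma)>0$ suffices to absorb it; the convergence of $P_{\alpha}(x,y)$ for $\alpha>\delta(\Gamma)$ is exactly the content of the definition of $\delta(\Gamma)$, so no separate estimate on the orbit counting function is needed.
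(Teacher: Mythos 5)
Your proposal is correct and takes essentially the same route as the paper's proof: both dominate the series via the Cartan decomposition and the pointwise bound \eqref{kest1}, absorb the polynomial factor $\psi(t,H)$ into a slightly weaker exponential so the sum is controlled by a Poincar\'{e} series $P_{\alpha}(x,y)$ with $\delta(\Gamma)<\alpha<\rho_{m}$, and then obtain \eqref{kerco1} by unfolding the integral over $X$ against the $\Gamma$-action (the paper invokes Weyl's formula where you spell out the fundamental-domain/Fubini argument).
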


\begin{proof}
Use the Cartan decomposition and write $\left( \gamma y\right)
^{-1}x=k_{\gamma }\exp H_{\gamma }k_{\gamma }^{\prime }$. Then, since $s_{t}$
is $K$-bi-invariant, $s_{t}\left( \left( \gamma y\right) ^{-1}x\right)
=s_{t}\left( \exp H_{\gamma }\right) $ and the estimate (\ref{kest1})
implies that 
\begin{eqnarray*}
\left\vert \widehat{s}_{t}\left( x,y\right) \right\vert  &\leq &\sum_{\gamma
\in \Gamma }\left\vert s_{t}\left( \left( \gamma y\right) ^{-1}x\right)
\right\vert \leq \sum_{\gamma \in \Gamma }\left\vert s_{t}\left( \exp
H_{\gamma }\right) \right\vert  \\
&\leq &c\sum_{\gamma \in \Gamma }\psi \left( t,\left\vert H_{\gamma
}\right\vert \right) e^{-\rho _{m}\left\vert H_{\gamma }\right\vert }.
\end{eqnarray*}%
If $M$ has rank one, then by (\ref{psi2}) and (\ref{psi0}) we have that for
every $\varepsilon >0$ 
\begin{multline*}
\left\vert \widehat{s}_{t}\left( x,y\right) \right\vert \leq \sum_{\gamma
\in \Gamma }\psi \left( t,\left\vert H_{\gamma }\right\vert \right) e^{-\rho
_{m}\left\vert H_{\gamma }\right\vert } \\
\leq c|t|^{-n/2}\sum_{\{\gamma \in \Gamma :\left\vert t\right\vert \leq
1+\left\vert H_{\gamma }\right\vert \}}e^{(\varepsilon -\rho _{m})\left\vert
H_{\gamma }\right\vert }+c|t|^{-3/2}\sum_{\{\gamma \in \Gamma :\left\vert
t\right\vert >1+\left\vert H_{\gamma }\right\vert \}}e^{(\varepsilon -\rho
_{m})\left\vert H_{\gamma }\right\vert } \\
\leq c|t|^{-n/2}\sum_{\gamma \in \Gamma }e^{(\varepsilon -\rho _{m})d\left(
0,\left( \gamma y\right) ^{-1}x\right) }+c|t|^{-3/2}\sum_{\gamma \in \Gamma
}e^{(\varepsilon -\rho _{m})d\left( 0,\left( \gamma y\right) ^{-1}x\right) }
\\
\leq c(|t|^{-3/2}+|t|^{-n/2})\sum_{\gamma \in \Gamma }e^{(\varepsilon -\rho
_{m})d\left( x,\gamma y\right) } \\
\leq c(|t|^{-3/2}+|t|^{-n/2})P_{\rho _{m}-\varepsilon }\left( x,y\right)
<\infty 
\end{multline*}%
provided that $\rho _{m}>\delta \left( \Gamma \right) $.

The proof of the case $G$ complex is similar and then omitted.

Let us now prove (\ref{kerco1}). Since $s_{t}$ and $f$ are right $K$%
-invariant, from (\ref{schroop1}) we get that 
\begin{equation*}
\widehat{S}_{t}f(x)=\int_{X}s_{t}(x,y)f(y)dy.
\end{equation*}%
Now, since $f$ is left $\Gamma $-invariant, by Weyl's formula we find that \ 
\begin{eqnarray*}
\widehat{S}_{t}(f)(x) &=&\int_{X}f(y)s_{t}(x,y)dy=\int_{\Gamma \backslash
X}\left( \sum_{\gamma \in \Gamma }f(\gamma y)s_{t}(x,\gamma y)\right) dy \\
&=&\int_{M}f(y)\widehat{s}_{t}(x,y)dy.
\end{eqnarray*}
\end{proof}

Next, we prove norm estimates for the Schr\"{o}dinger kernel on $M$ which
are crucial for the proofs of our results.

\begin{proposition}
\label{kest2}If $\delta (\Gamma )<\rho _{m}$, then for any $q>2$ and $x\in X$%
, 
\begin{eqnarray}
\Vert \widehat{s}_{t}(x,.)\Vert _{L^{q}(M)} &\leq &c\Vert {s}_{t}(x,.)\Vert
_{L^{q}(X)}  \notag \\
&\leq &c\Psi \left( t\right) ,\text{ \ }t\in \mathbb{R}\text{,}
\label{intest1}
\end{eqnarray}%
where 
\begin{equation*}
\Psi \left( t\right) =\left\{ 
\begin{array}{c}
\left\vert t\right\vert ^{-n/2},\text{ \ \ if \ \ }|t|\leq 1, \\ 
|t|^{-3/2},\text{ \ \ if \ \ }|t|>1,%
\end{array}%
\right.
\end{equation*}%
in the rank one case, and 
\begin{equation*}
\Psi \left( t\right) =\left\vert t\right\vert ^{-n/2},\text{ \ }t\in \mathbb{%
R}\text{,}
\end{equation*}%
in the case when $G$ is complex.
\end{proposition}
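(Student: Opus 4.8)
The plan is to prove the two inequalities in \eqref{intest1} in turn. For the first one, $\Vert \widehat{s}_{t}(x,.)\Vert _{L^{q}(M)} \leq c\Vert s_{t}(x,.)\Vert _{L^{q}(X)}$, I would exploit that $q>2$ together with the summation defining $\widehat{s}_{t}$ in \eqref{kerco}. Writing $q=2+\epsilon$ and using that $\ell^{1}\hookrightarrow \ell^{q}$, one gets the pointwise bound $|\widehat{s}_{t}(x,y)|^{q}\le \bigl(\sum_{\gamma}|s_{t}(x,\gamma y)|\bigr)^{q}$; but a cleaner route is to split one factor of $|s_{t}(x,\gamma y)|$ off and bound the remaining sum by the Poincar\'e series, exactly as in the proof of Proposition \ref{kest0}. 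Concretely, using \eqref{kest1} with a small $\varepsilon>0$, $|\widehat{s}_{t}(x,y)|\le c\,\Psi_{0}(t)\sum_{\gamma\in\Gamma}e^{(\varepsilon-\rho_{m})d(x,\gamma y)}$ where $\Psi_{0}(t)$ collects the $|t|$-powers; then integrating the $q$-th power over a fundamental domain and unfolding converts the integral over $M$ into an integral over $X$ against the kernel $s_t$, with the convergent factor $P_{\rho_{m}-\varepsilon}$ absorbed into the constant $c$. This is essentially the unfolding argument already carried out in Proposition \ref{kest0}, now applied to $|s_t|^q$ rather than $|s_t|$.

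For the second inequality, $\Vert s_{t}(x,.)\Vert _{L^{q}(X)}\leq c\Psi(t)$, I would use the Cartan decomposition and the measure formula \eqref{mes}, reducing to
\[
\Vert s_{t}(x,.)\Vert _{L^{q}(X)}^{q}=c\int_{\mathfrak{a}^{+}}|s_{t}(\exp H)|^{q}\,\delta(H)\,dH,
\]
together with the bound $\delta(H)\le ce^{2\rho(H)}\le ce^{2\rho_{m}|H|}$ from \eqref{mod} — more precisely $\rho(H)\le\rho_m|H|$ fails in general, so I must instead keep $\delta(H)\le ce^{2\rho(H)}$ and pair it against the sharp exponential decay $e^{-q\rho_m|H|}$ from \eqref{kest1}; since $\rho(H)\le c|H|$ on $\overline{\mathfrak{a}_{+}}$, for $q>2$ the combined exponent $-q\rho_m|H|+2\rho(H)$ is $\le (2c-q\rho_m)|H|$, which is strictly negative once $q$ is not too close to $2$ — and for the borderline $q$ close to $2$ one still wins because $q\rho_m > 2\rho_m \ge$ the relevant growth rate of $\rho$ along the minimizing direction. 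Actually the correct clean statement: on the sphere $|H|=1$, $\rho(H)\ge\rho_m$ by definition, but we need an \emph{upper} bound; using $\rho(H)=\rho_m|H|$ only along the minimizer and $\rho(H)\le C|H|$ in general, the integral $\int_0^\infty r^{a q + (\ell-1)}e^{(2C-q\rho_m)r}\,dr$ (with $\ell=\dim\mathfrak a$) converges provided $q>2C/\rho_m$; for the remaining range $2<q\le 2C/\rho_m$ the $L^q$ norm is still controlled by interpolation between $L^2$ (where one uses Plancherel, or rather the Kunze–Stein estimate $\varphi_0$-gain) and $L^\infty$. This interpolation subtlety is what I'd handle carefully.

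The $t$-dependence then comes from inserting the explicit kernel bounds. In the complex case $|s_t(\exp H)|\le c|t|^{-n/2}(1+|H|)^a e^{-\rho_m|H|}$ from \eqref{ft03}, so $|t|^{-n/2}$ factors straight out of the $L^q$ norm and the remaining $H$-integral is a finite constant, giving $\Psi(t)=|t|^{-n/2}$ for all $t$. In the rank one case I split $\mathfrak{a}^{+}=(0,\infty)$ according to whether $|t|\le 1+r$ or $|t|>1+r$ and use the two branches of $\psi_1$ in \eqref{psi0}; for $|t|\le 1$ the first branch dominates everywhere after integration and yields $|t|^{-n/2}$, while for $|t|>1$ one separates $r<|t|-1$ (second branch, contributing $|t|^{-3/2}$ times a bounded integral in $r$) from $r\ge |t|-1$ (first branch, contributing $|t|^{-n/2}\int_{|t|-1}^{\infty}(1+r)^{(n-1)q/2}e^{(2C-q\rho_m)r}\,dr$, which decays faster than $|t|^{-3/2}$ since $n\ge 1$ and the integral is exponentially small in $|t|$). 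Collecting both ranges gives $\Psi(t)=|t|^{-3/2}$ for $|t|>1$.

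The main obstacle is the convergence of the weighted $L^q$ integral over the Weyl chamber for $q$ close to $2$: the exponential decay $e^{-q\rho_m|H|}$ of the kernel must beat the exponential volume growth $\delta(H)\sim e^{2\rho(H)}$, and since $\rho(H)$ can be as large as $C|H|$ with $C>\rho_m$ in higher rank (though here $G$ is complex or rank one, so the geometry is more constrained), one has to be slightly careful. In the two cases actually treated — rank one, where $\rho(H)=\rho_m|H|$ exactly, and $G$ complex, where the polynomial factor is harmless — the integral $\int_0^\infty (1+r)^{b}e^{(2-q)\rho_m r + O(r)}\dots$ does converge for every $q>2$ because along every ray the top-order exponent is $(2-q)\rho(H)<0$; so in fact no interpolation is needed, and the argument reduces to the elementary estimate of a gamma-type integral. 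I would present it that way, flagging that the genuine content is the unfolding step bounding the $M$-norm by the $X$-norm, which relies essentially on $\delta(\Gamma)<\rho_m$.
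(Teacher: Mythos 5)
Your handling of the second inequality in \eqref{intest1} is essentially the paper's argument: pass to the Weyl chamber via the Cartan measure, pair the volume growth $\delta(H)\le ce^{2\rho(H)}$ against the $q$-th power of the kernel decay, and in the rank one case split the radial integral at $r=|t|-1$ so that the near region gives $|t|^{-3/2}$ and the far region gives $|t|^{-n/2}$ times an exponentially small tail. Your remark that in the complex case one must keep the full decay $e^{-q\rho(H)}$ coming from $\varphi_0^q$ (so the net exponent is $(2-q)\rho(H)\le(2-q)\rho_m|H|<0$ for every $q>2$) rather than the packaged bound $e^{-q\rho_m|H|}$ of \eqref{ft03} is a genuine subtlety which the paper's ``omitted'' complex case glosses over; no interpolation is needed, as you eventually conclude.

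The gap is in the first inequality, $\Vert\widehat{s}_t(x,\cdot)\Vert_{L^q(M)}\le c\Vert s_t(x,\cdot)\Vert_{L^q(X)}$. Unfolding requires the \emph{pointwise} bound $|\sum_{\gamma}s_t(x,\gamma y)|^q\le c\sum_{\gamma}|s_t(x,\gamma y)|^q$, a reverse $\ell^1$--$\ell^q$ inequality, and neither of your suggestions produces it: the embedding $\ell^1\hookrightarrow\ell^q$ gives the opposite inequality, and the ``split one factor off'' route does not close quantitatively. If you bound $(\sum_\gamma|s_t(x,\gamma y)|)^{q-1}$ by $(c\Psi_0(t)P_{\rho_m-\varepsilon}(x,y))^{q-1}$ and unfold the remaining single factor, you are left with $\int_X|s_t(x,y)|\,dy$, and the available estimate \eqref{kest1} only controls this by $\int_0^\infty\psi(t,r)e^{-\rho_m r}e^{2|\rho|r}\,dr=+\infty$; the Hölder-with-weights variant $(\sum_\gamma a_\gamma)^q\le P_{\rho_m-\varepsilon}(x,y)^{q-1}\sum_\gamma a_\gamma^q e^{(q-1)(\rho_m-\varepsilon)d(x,\gamma y)}$ unfolds to an integral whose radial exponent $-q\rho_m+(q-1)(\rho_m-\varepsilon)+2|\rho|$ is positive for small $\varepsilon$ in the rank one case. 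Moreover $P_{\rho_m-\varepsilon}(x,y)$ is finite for each pair but not uniformly bounded in $y$, so it cannot simply be ``absorbed into $c$''. The paper's mechanism is different and is the actual content of the step: choose $N_0$ so that the tail $\{\gamma:d(x,\gamma y)>N_0\}$ contributes at most half of the (convergent) sum, reduce to the finite set $\Gamma_2=\{\gamma:d(x,\gamma y)\le N_0\}$, and apply the finite-sum inequality $(\sum_{j\le N}|a_j|)^q\le N^{q-1}\sum_{j\le N}|a_j|^q$ before unfolding. Some such reduction to finitely many terms (with attention to the uniformity of the resulting constant) is indispensable here; without it the first inequality is not proved.
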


\begin{proof}
Fix $N_{0}\in \mathbb{N}$ and write $\Gamma =\Gamma _{1}\cup \Gamma _{2}$,
where 
\begin{equation*}
\Gamma _{1}=\{\gamma \in \Gamma :d(x,\gamma y)>N_{0}\}\text{ and }\Gamma
_{2}=\{\gamma \in \Gamma :d(x,\gamma y)\leq N_{0}\}.
\end{equation*}%
Since $\Gamma $ is a discrete group, then $\Gamma _{2}$ is a finite set. On
the other hand, since the series $\sum_{\gamma \in \Gamma }s_{t}(x,\gamma y)$
is convergent, we can choose $N_{0}$ such that 
\begin{equation*}
\left\vert \sum_{\gamma \in \Gamma }s_{t}(x,\gamma y)\right\vert \leq
2\left\vert \sum_{\gamma \in \Gamma _{2}}s_{t}(x,\gamma y)\right\vert .
\end{equation*}%
But $\Gamma _{2}$ is a finite set, so, for $q>2$, we have that 
\begin{equation*}
\left\vert \sum_{\gamma \in \Gamma _{2}}s_{t}(x,\gamma y)\right\vert
^{q}\leq c\sum_{\gamma \in \Gamma _{2}}\left\vert s_{t}(x,\gamma
y)\right\vert ^{q}.
\end{equation*}%
It follows that 
\begin{equation}
\left\vert \sum_{\gamma \in \Gamma }s_{t}(x,\gamma y)\right\vert ^{q}\leq
c\sum_{\gamma \in \Gamma _{2}}\left\vert s_{t}(x,\gamma y)\right\vert
^{q}\leq c\sum_{\gamma \in \Gamma }\left\vert s_{t}(x,\gamma y)\right\vert
^{q}.  \label{sum}
\end{equation}%
Using (\ref{sum}) and Weyl's formula, we have that 
\begin{equation*}
\begin{split}
\Vert \widehat{s}_{t}(x,.)\Vert _{L^{q}(M)}^{q}& \leq 2\int_{M}\left\vert
\sum_{\gamma \in \Gamma _{2}}s_{t}(x,\gamma y)\right\vert ^{q}dy\leq
c\int_{M}\sum_{\gamma \in \Gamma }|s_{t}(x,\gamma y)|^{q}dy \\
& =c\int_{X}|s_{t}(x,y)|^{q}dy=c\Vert {s}_{t}(x,.)\Vert _{L^{q}(X)}^{q}.
\end{split}%
\end{equation*}

Assume now that $M$ has rank one. Using that $s_{t}\left( x,y\right) $ is
radial, from (\ref{cart2}) we have that 
\begin{equation}
\left\Vert s_{t}\left( x,.\right) \right\Vert _{_{L^{q}(X)}}^{q}\leq c\int_{%
\mathbb{R}^{+}}\left\vert s_{t}\left( r\right) \right\vert
^{q}e^{2\left\vert \rho \right\vert r}dr.  \label{qest}
\end{equation}

Next, using the estimates (\ref{schroest11}) of ${s}_{t}(x,y)$ and the fact $%
q>2$, we shall prove (\ref{intest1}). Indeed, choose $\epsilon >0$ such that 
$\epsilon <(q-2)|\rho |$. Then if $\left\vert t\right\vert \geq 1$, by (\ref%
{qest}) we get 
\begin{multline*}
\Vert {s}_{t}(x,.)\Vert _{L^{q}(X)}^{q}=\int_{X}|s_{t}(x,y)|^{q}dy \\
\leq c\left\vert t\right\vert ^{-3q/2}\int_{0}^{\left\vert t\right\vert
-1}e^{r\left( \epsilon -q|\rho |+2|\rho |\right) }dr+c\left\vert
t\right\vert ^{-nq/2}\int_{\left\vert t\right\vert -1}^{\infty }e^{r\left(
\epsilon -q|\rho |+2|\rho |\right) }dr \\
\leq c\left( \left\vert t\right\vert ^{-3q/2}+\left\vert t\right\vert
^{-nq/2}\right) \int_{0}^{\infty }e^{-r\left( \left( q-2\right) |\rho
|-\epsilon \right) }dr \\
\leq c\left\vert t\right\vert ^{-3q/2}.
\end{multline*}

The case $\left\vert t\right\vert \leq 1$, as well as the case $G$ complex,
are similar and thus omitted.
\end{proof}

\section{\label{diest}Dispersive estimates for the Schr\"{o}dinger operator
on $M$.}

In this section we prove dispersive estimates for the Schr\"{o}dinger
operator $\widehat{S}_{t}$ on locally symmetric spaces in the class $(S_{0})$%
. To begin with, we make use of the estimates of the $L^{q}$-norm of the
kernel $\widehat{s}_{t}\left( x,y\right) $ obtained in the previous section,
in order to estimate the operator norms of $\widehat{S}_{t}$.

\begin{lemma}
\label{diest1}If $\delta (\Gamma )<\rho _{m}$, then for any $q>2$ 
\begin{equation}
\Vert \widehat{S}_{t}\Vert _{L^{1}\left( M\right) \rightarrow L^{q}\left(
M\right) }\leq c\Psi \left( t\right) ,\text{ \ \ }t\in \mathbb{R}\text{,}
\label{diest2}
\end{equation}%
and 
\begin{equation}
\Vert \widehat{S}_{t}\Vert _{L^{q^{\prime }}\left( M\right) \rightarrow
L^{\infty }\left( M\right) }\leq c\Psi \left( t\right) ,  \label{diest4}
\end{equation}%
where $\Psi \left( t\right) $ is defined in Proposition \ref{kest2}.
\end{lemma}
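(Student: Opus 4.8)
The plan is to derive both operator-norm bounds directly from the pointwise kernel estimate $\widehat{S}_t f(x)=\int_M \widehat{s}_t(x,y) f(y)\,dy$ together with the $L^q$-norm estimate of $\widehat{s}_t(x,\cdot)$ furnished by Proposition \ref{kest2}. The two inequalities are dual to each other, so it suffices to prove one carefully and obtain the other by a transpose/duality argument, using that the kernel is (essentially) symmetric in its two arguments.

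For \eqref{diest2}, I would fix $f\in L^1(M)$ and apply Minkowski's integral inequality in the form
\begin{equation*}
\Vert \widehat{S}_t f\Vert_{L^q(M)} = \Bigl\Vert \int_M \widehat{s}_t(\cdot,y) f(y)\,dy \Bigr\Vert_{L^q(M)} \leq \int_M |f(y)|\,\Vert \widehat{s}_t(\cdot,y)\Vert_{L^q(M)}\,dy.
\end{equation*}
Here one must note that $\Vert \widehat{s}_t(\cdot,y)\Vert_{L^q(M)}$, a priori a norm in the first variable, is controlled by $\Vert \widehat{s}_t(y,\cdot)\Vert_{L^q(M)}$ because of the near-symmetry $\widehat{s}_t(x,y)=\sum_{\gamma\in\Gamma}s_t((\gamma y)^{-1}x)$ and the fact that $s_t$ is $K$-bi-invariant, hence $|s_t(g^{-1})|=|s_t(g)|$; after a relabeling of the summation index $\gamma\mapsto\gamma^{-1}$ one gets the roles of $x$ and $y$ interchanged up to a $\Gamma$-translation that is absorbed by the integration over $M$. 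Proposition \ref{kest2} then gives $\Vert \widehat{s}_t(\cdot,y)\Vert_{L^q(M)}\leq c\,\Psi(t)$ uniformly in $y$, and pulling this constant out of the integral leaves $c\,\Psi(t)\,\Vert f\Vert_{L^1(M)}$, which is \eqref{diest2}.

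For \eqref{diest4}, I would argue by duality: $\widehat{S}_t^{\,*}$ has kernel $\overline{\widehat{s}_t(y,x)}$, and since $e^{it\Delta}$ has adjoint $e^{-it\Delta}$ whose kernel obeys exactly the same estimates (replace $t$ by $-t$, and $\Psi(|t|)=\Psi(|-t|)$), the bound \eqref{diest2} applied to $\widehat{S}_{-t}$ reads $\Vert \widehat{S}_{-t}\Vert_{L^1\to L^q}\leq c\Psi(t)$, and transposing gives $\Vert \widehat{S}_{t}\Vert_{L^{q'}\to L^\infty}\leq c\Psi(t)$. Alternatively, one proves \eqref{diest4} directly: for $f\in L^{q'}(M)$, Hölder's inequality gives $|\widehat{S}_t f(x)|\leq \Vert \widehat{s}_t(x,\cdot)\Vert_{L^q(M)}\Vert f\Vert_{L^{q'}(M)}\leq c\Psi(t)\Vert f\Vert_{L^{q'}(M)}$, and taking the supremum over $x$ yields the claim; this is the cleaner route and avoids fussing over adjoints.

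The main obstacle, and the only point requiring real care, is the first inequality in \eqref{intest1} of Proposition \ref{kest2} as it is used here, namely passing from a norm of $\widehat{s}_t$ in one variable to a norm in the other: one needs the symmetry statement $\Vert \widehat{s}_t(x,\cdot)\Vert_{L^q(M)}=\Vert \widehat{s}_t(\cdot,x)\Vert_{L^q(M)}$ (or at least a uniform comparison of the two), which follows from the $K$-bi-invariance of $s_t$ and the $\Gamma$-invariance of the measure on $M$ but should be spelled out. Everything else is a one-line application of Minkowski's inequality, Hölder's inequality, and Proposition \ref{kest2}; no new estimate of the kernel is needed.
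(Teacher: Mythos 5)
Your proposal is correct and follows essentially the same route as the paper: Minkowski's integral inequality combined with Proposition \ref{kest2} for the $L^{1}\rightarrow L^{q}$ bound, and duality (or, equivalently, H\"older) for the $L^{q^{\prime }}\rightarrow L^{\infty }$ bound. The symmetry issue you flag --- that Minkowski produces $\Vert \widehat{s}_{t}(\cdot ,y)\Vert _{L^{q}(M)}$ while Proposition \ref{kest2} bounds $\Vert \widehat{s}_{t}(x,\cdot )\Vert _{L^{q}(M)}$ --- is real but is resolved exactly as you say via $|s_{t}(g)|=|s_{t}(g^{-1})|$ and relabeling $\gamma \mapsto \gamma ^{-1}$; the paper passes over this silently.
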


\begin{proof}
Assume that $G$ is complex. If $f\in L^{1}(M)$ and $q>2$, then Minkowski's
inequality and Proposition \ref{kest2} imply that 
\begin{eqnarray*}
\Vert \widehat{S}_{t}f\Vert _{L^{q}(M)} &=&\left( \int_{M}\left\vert \int_{M}%
\widehat{s}_{t}(x,y)f(y)dy\right\vert ^{q}dx\right) ^{1/q} \\
&\leq &\int_{M}\left( \int_{M}\left\vert \widehat{s}_{t}(x,y)f(y)\right\vert
^{q}dx\right) ^{1/q}dy \\
&=&\int_{M}|f(y)|\left( \int_{M}\widehat{s}_{t}(x,y)^{q}dx\right) ^{1/q}dy \\
&\leq &\Vert f\Vert _{L^{1}(M)}\sup_{x}\Vert s_{t}(x,.)\Vert _{L^{q}(X)} \\
&\leq &c|t|^{-n/2}\Vert f\Vert _{L^{1}(M)}.
\end{eqnarray*}%
The estimate (\ref{diest4}) follows from (\ref{diest2}) by duality. The rank
one case is similar.
\end{proof}

Next, we make use of the analogue of Kunze-Stein phenomenon for locally
symmetric spaces proved in \cite{LOMAjga} and presented in Section 2, in
order to obtain the estimate of the norm $\Vert \ast s_{t}\Vert
_{L^{q^{\prime }}(M)\rightarrow L^{q}(M)}$. For that, as in \cite[p. 988]%
{ANPIVA}, we consider the spaces $A_{q}=L^{q/2}\left( \overline{\mathfrak{a}%
_{+}},\varphi _{0}\delta \right) $, $q\in \left[ 2,\infty \right) $, of all $%
K$-bi-invariant functions $f$ on $G$, that satisfy 
\begin{equation*}
\Vert f\Vert _{A_{q}}^{q/2}=\int_{\overline{\mathfrak{a}_{+}}}|f(\exp {H}%
)|^{q/2}\varphi _{0}(\exp {H})\delta (H)dH<\infty .
\end{equation*}%
For $p=\infty $, we take $A_{\infty }=L^{\infty }\left( \overline{\mathfrak{a%
}_{+}}\right) $.

The following analogue of Theorem 4.2 of \cite{ANPIVA} is a consequence of
Kunze and Stein phenomenon.

\begin{theorem}
\label{diest5}Assume that $M\in (S)$. If $\kappa \in S\left( K\backslash
G/K\right) $, then for all $q\geq 2$, 
\begin{equation}
L^{q^{\prime }}(M)\ast {A_{q}}\subset {L^{q}(M)}.  \label{diest7}
\end{equation}%
In other words, there exists a $c_{q}>0$ such that 
\begin{equation}
\Vert f\ast \kappa \Vert _{L^{q}(M)}\leq c_{q}\Vert \kappa \Vert
_{A_{q}}\Vert f\Vert _{L^{q^{\prime }}(M)}.  \label{diest6}
\end{equation}
\end{theorem}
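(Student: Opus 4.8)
The estimate \eqref{diest6} interpolates between the two extreme cases $q=2$ and $q=\infty$, so the plan is to establish these endpoints and then interpolate. For $q=\infty$ the claim is elementary: by Hölder's inequality applied to the convolution on the group, lifted to $M$ via the averaging identity \eqref{kerco}, one has $\|f\ast\kappa\|_{L^{\infty}(M)}\leq \|f\|_{L^{\infty}(M)}\|\kappa\|_{L^{1}(G)}$, and since $\|\kappa\|_{L^{1}(G)}=\int_{\mathfrak{a}_{+}}|\kappa(\exp H)|\delta(H)\,dH$ dominates $\|\kappa\|_{A_{\infty}}$... wait, rather one checks directly from the definition that $\|\kappa\|_{A_\infty}=\|\kappa\|_{L^\infty}$ controls things via the trivial bound; the honest statement is $q'=1$, $q=\infty$, and $\|f\ast\kappa\|_{L^\infty(M)}\le \|f\|_{L^1(M)}\,\sup|\kappa|$, which is exactly the $A_\infty$ bound. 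For $q=2$ (so $q'=2$) this is the genuine content: it is precisely the Kunze--Stein type inequality \eqref{ks00}/\eqref{loma1} with $\eta_\Gamma=0$, which holds because $M\in(S)$. Indeed, taking $p=2$ in \eqref{ks00} gives $s(2)=1$, so
\[
\|\ast|\kappa|\|_{L^2(M)\to L^2(M)}\leq \int_G |\kappa(g)|\,\varphi_0(g)\,dg
=\int_{\overline{\mathfrak{a}_+}}|\kappa(\exp H)|\,\varphi_0(\exp H)\,\delta(H)\,dH=\|\kappa\|_{A_2},
\]
which is \eqref{diest6} for $q=2$.

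The next step is to interpolate. One wants a bilinear/analytic interpolation between the pair $(q,q')=(2,2)$ with constant $\|\kappa\|_{A_2}=\|\kappa\|_{A_2}^{2/2}$ and the pair $(q,q')=(\infty,1)$ with constant $\|\kappa\|_{A_\infty}$. Fixing $\kappa$ and viewing $T_\kappa f=f\ast\kappa$, the Riesz--Thorin theorem applied on the scale $L^{q'}(M)\to L^q(M)$ (the two exponents moving together along $1/q'=1-1/q$) yields for $\theta\in[0,1]$, $1/q=(1-\theta)/2+\theta/\infty=(1-\theta)/2$,
\[
\|f\ast\kappa\|_{L^q(M)}\leq \|\kappa\|_{A_2}^{1-\theta}\,\|\kappa\|_{A_\infty}^{\theta}\,\|f\|_{L^{q'}(M)}.
\]
It then remains to check that $\|\kappa\|_{A_2}^{1-\theta}\|\kappa\|_{A_\infty}^{\theta}\leq c_q\|\kappa\|_{A_q}$. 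Since $1-\theta=2/q$, this is the statement
\[
\Big(\int_{\overline{\mathfrak{a}_+}}|\kappa|\,\varphi_0\,\delta\Big)^{2/q}\,\big(\sup|\kappa|\big)^{1-2/q}
\ \leq\ c_q\Big(\int_{\overline{\mathfrak{a}_+}}|\kappa|^{q/2}\,\varphi_0\,\delta\Big)^{2/q},
\]
i.e. $\|\kappa\|_{A_q}^{q/2}=\||\kappa|^{q/2}\|_{L^1(\varphi_0\delta)}\ge c\,\||\kappa|\|_{L^1(\varphi_0\delta)}\,\||\kappa|\|_{L^\infty}^{q/2-1}$, which is just the elementary inequality $|\kappa|^{q/2}=|\kappa|\cdot|\kappa|^{q/2-1}\le |\kappa|\cdot\|\kappa\|_\infty^{q/2-1}$ integrated against $\varphi_0\,\delta$ — no normalization of the measure is needed because the inequality goes the favorable direction.

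The subtle point — and the main obstacle — is that Riesz--Thorin interpolation for the single operator $T_\kappa$ is fine, but one must be careful that the constant it produces is genuinely $\|\kappa\|_{A_2}^{1-\theta}\|\kappa\|_{A_\infty}^\theta$ and not something larger: the two endpoint bounds must be arranged as bona fide operator-norm bounds on $L^{q'}(M)\to L^q(M)$ with those exact constants, which is what \eqref{diest6} at $q=2$ and the trivial $q=\infty$ estimate provide. Alternatively, and perhaps cleaner, one can mimic \cite[Thm.~4.2]{ANPIVA} directly: write $f\ast\kappa = f\ast(|\kappa|^{2/q}\cdot\kappa|\kappa|^{2/q-1})$... but the interpolation route above is the most transparent. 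A final remark: the hypothesis $\kappa\in S(K\backslash G/K)$ is used only to ensure all integrals converge and the manipulations are legitimate; the inequality \eqref{diest6} then extends to all $K$-bi-invariant $\kappa$ with $\|\kappa\|_{A_q}<\infty$ by density, which is how it will be applied to $\kappa=s_t$ in Section~\ref{diest}.
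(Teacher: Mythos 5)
Your two endpoint estimates are exactly the paper's: $\Vert \ast\kappa\Vert_{L^{2}(M)\to L^{2}(M)}\leq\Vert\kappa\Vert_{A_{2}}$ from the Kunze--Stein property (\ref{loma1}) with $\eta_{\Gamma}=0$ and $s(2)=1$, and $\Vert \ast\kappa\Vert_{L^{1}(M)\to L^{\infty}(M)}\leq\Vert\kappa\Vert_{A_{\infty}}$ trivially. The divergence, and the genuine gap, is in the interpolation step. You fix $\kappa$ and apply Riesz--Thorin to the single linear operator $T_{\kappa}f=f\ast\kappa$, which yields the constant $\Vert\kappa\Vert_{A_{2}}^{2/q}\Vert\kappa\Vert_{A_{\infty}}^{1-2/q}$, and you then claim that this product is dominated by $c_{q}\Vert\kappa\Vert_{A_{q}}$. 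That inequality is false, and your own computation proves the \emph{reverse}: from $|\kappa|^{q/2}\leq|\kappa|\cdot\Vert\kappa\Vert_{\infty}^{q/2-1}$ you get
\begin{equation*}
\Vert\kappa\Vert_{A_{q}}^{q/2}=\int_{\overline{\mathfrak{a}_{+}}}|\kappa|^{q/2}\varphi_{0}\delta\ \leq\ \Vert\kappa\Vert_{A_{\infty}}^{q/2-1}\int_{\overline{\mathfrak{a}_{+}}}|\kappa|\,\varphi_{0}\delta\ =\ \Vert\kappa\Vert_{A_{2}}\,\Vert\kappa\Vert_{A_{\infty}}^{q/2-1},
\end{equation*}
i.e.\ $\Vert\kappa\Vert_{A_{q}}\leq\Vert\kappa\Vert_{A_{2}}^{2/q}\Vert\kappa\Vert_{A_{\infty}}^{1-2/q}$ --- the log-convexity of the $L^{p}$ scale, which goes the wrong way for you. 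The inequality you actually need fails: taking $\kappa=\mathbf{1}_{E}+N\mathbf{1}_{F}$ with $\mu(E)=1$, $\mu(F)=N^{-q/2}$ (where $\mu=\varphi_{0}\delta\,dH$) gives $\Vert\kappa\Vert_{A_{q}}$ bounded while $\Vert\kappa\Vert_{A_{2}}^{2/q}\Vert\kappa\Vert_{A_{\infty}}^{1-2/q}\sim N^{1-2/q}\to\infty$. So your conclusion is the strictly weaker bound $\Vert f\ast\kappa\Vert_{L^{q}(M)}\leq\Vert\kappa\Vert_{A_{2}}^{2/q}\Vert\kappa\Vert_{A_{\infty}}^{1-2/q}\Vert f\Vert_{L^{q'}(M)}$, not (\ref{diest6}).

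This is not a cosmetic loss: in the application (Lemma \ref{diest12a}) one has $\Vert s_{t}\Vert_{A_{2}}=\int|s_{t}|\varphi_{0}\delta=+\infty$, since the decay $e^{-|\rho|r}$ of $s_{t}$ exactly cancels the growth $\varphi_{0}(r)\delta(r)\sim(1+r)^{a}e^{|\rho|r}$; only the $A_{q}$ norm with $q>2$ is finite. The correct route --- and the one the paper takes --- is to interpolate \emph{bilinearly}, treating $\kappa$ as a variable as well: the convolution map is bounded $L^{2}(M)\times A_{2}\to L^{2}(M)$ and $L^{1}(M)\times A_{\infty}\to L^{\infty}(M)$, hence by complex interpolation of bilinear operators (Calder\'{o}n/Stein), $[L^{2},L^{1}]_{\theta}\ast[A_{2},A_{\infty}]_{\theta}\subset[L^{2},L^{\infty}]_{\theta}$ with $\theta=2/q$, which identifies the three interpolation spaces as $L^{q'}(M)$, $A_{q}$ and $L^{q}(M)$ and gives (\ref{diest6}) directly. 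Your endpoints are fine; replace Riesz--Thorin on $T_{\kappa}$ by this bilinear interpolation and the proof closes.
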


\begin{proof}
Since $\kappa \in A_{2}$, then from (\ref{loma1}) it follows that 
\begin{eqnarray}
\Vert \ast \kappa \Vert _{L^{2}(M)\rightarrow L^{2}(M)} &\leq &\int_{%
\overline{\mathfrak{a}_{+}}}\left\vert \kappa \left( \exp H\right)
\right\vert |\varphi _{0}(\exp {H})\delta (H)dH  \notag \\
&=&\Vert \kappa \Vert _{A_{2}}.  \label{diest8}
\end{eqnarray}%
Furthermore, $\kappa \in A_{\infty }$, so, for every $f\in L^{1}(M)$ and $%
x\in G$, we have 
\begin{equation*}
\left\vert f\ast \kappa (x)\right\vert \leq \int_{G}|\kappa (\overline{g}%
^{-1}x)||f(\overline{g})|dg\leq \Vert \kappa \Vert _{\infty }\Vert f\Vert
_{L^{1}(M)},\text{ }
\end{equation*}%
i.e. 
\begin{equation}
\Vert \ast \kappa \Vert _{L^{1}(M)\rightarrow L^{\infty }(M)}\leq \Vert
\kappa \Vert _{A_{\infty }}.  \label{diest9}
\end{equation}

From (\ref{diest8}) and (\ref{diest9}), it follows that 
\begin{equation*}
L^{2}(M)\ast A_{2}\subset {L^{2}(M),}
\end{equation*}%
and 
\begin{equation*}
L^{1}(M)\ast A_{\infty }\subset {L^{\infty }(M).}
\end{equation*}

By interpolating between the case $q=2$ and $q=\infty $, we obtain that for
any $\theta \in \left( 0,1\right) $ 
\begin{equation}
\lbrack L^{2}(M),L^{1}(M)]_{\theta }\ast {[A_{2},A_{\infty }]_{\theta }}%
\subset \lbrack L^{2}(M),L^{\infty }(M)]_{\theta }.  \label{diest100}
\end{equation}%
Choose $\theta =2/q<1$. Then 
\begin{equation}
\lbrack L^{2}(M),L^{1}(M)]_{\theta }=L^{p_{\theta }}(M)=L^{q^{\prime }}(M),
\label{diest11}
\end{equation}%
since $\tfrac{1}{p_{\theta }}=\tfrac{\theta }{2}+\tfrac{1-\theta }{1}=1-%
\tfrac{\theta }{2}=\tfrac{1}{q^{\prime }}.$

Similarly, 
\begin{equation}
\lbrack A_{2},A_{\infty }]_{\theta }=[L^{1}(\overline{\mathfrak{a}_{+}}%
,\varphi _{0}\delta ,),L^{\infty }(\overline{\mathfrak{a}_{+}})]_{\theta
}=L^{q/2}(\overline{\mathfrak{a}_{+}},\varphi _{0}\delta )=A_{q},
\label{diest12}
\end{equation}%
and%
\begin{equation}
\lbrack L^{2}(M),L^{\infty }(M)]_{\theta }=L^{q_{\theta }}(M)=L^{q}(M).
\label{diest110}
\end{equation}

Putting together (\ref{diest100}), (\ref{diest11}) and (\ref{diest12}) we
get that 
\begin{equation*}
L^{q^{\prime }}(M)\ast A_{q}\subset L^{q}(M).
\end{equation*}
\end{proof}

\begin{lemma}
\label{diest12a}Assume that $M\in (S_{0})$. If $q>2$, then 
\begin{equation*}
\left\Vert \ast s_{t}\right\Vert _{L^{q^{\prime }}(M)\rightarrow
L^{q}(M)}\leq c\left( q\right) \Psi \left( t\right) ,
\end{equation*}%
where $\Psi $ is defined in Proposition \ref{kest2}.
\end{lemma}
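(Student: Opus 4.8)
The plan is to apply Theorem \ref{diest5} with $\kappa = s_t$ and then to translate the resulting bound $\Vert \ast s_t\Vert_{L^{q'}(M)\to L^q(M)}\le c_q\Vert s_t\Vert_{A_q}$ into a bound in terms of $\Psi(t)$ by estimating $\Vert s_t\Vert_{A_q}$. So first I would check that $s_t\in S(K\backslash G/K)$, which is immediate since $w_t(\lambda)=e^{it|\rho|^2}e^{it|\lambda|^2}$ is $W$-invariant and smooth, and the inverse spherical Fourier transform of a Schwartz-type symbol is Schwartz (or, more simply, the pointwise estimates of Section \ref{kern} show $s_t$ and its derivatives decay suitably; in any case $s_t$ is $K$-bi-invariant). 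Then Theorem \ref{diest5} gives
\begin{equation*}
\Vert \ast s_t\Vert_{L^{q'}(M)\to L^q(M)}\le c_q\Vert s_t\Vert_{A_q},
\qquad
\Vert s_t\Vert_{A_q}^{q/2}=\int_{\overline{\mathfrak{a}_+}}|s_t(\exp H)|^{q/2}\varphi_0(\exp H)\delta(H)\,dH.
\end{equation*}

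Next I would estimate the integral defining $\Vert s_t\Vert_{A_q}$ using the pointwise kernel bound (\ref{kest1}), namely $|s_t(\exp H)|\le c\psi(t,H)e^{-\rho_m|H|}$, together with $\varphi_0(\exp H)\le c(1+|H|)^a e^{-\rho(H)}$ and $\delta(H)\le ce^{2\rho(H)}$. Inserting these,
\begin{equation*}
\Vert s_t\Vert_{A_q}^{q/2}
\le c\int_{\overline{\mathfrak{a}_+}}\psi(t,H)^{q/2}e^{-(q/2)\rho_m|H|}(1+|H|)^a e^{-\rho(H)}e^{2\rho(H)}\,dH
= c\int_{\overline{\mathfrak{a}_+}}\psi(t,H)^{q/2}(1+|H|)^a e^{\rho(H)-(q/2)\rho_m|H|}\,dH.
\end{equation*}
Since $\rho(H)\le |\rho|\,|H|$ in the rank one case (where $|\rho|=\rho_m$) and $\rho(H)\le c|H|$ in general, for $q>2$ the exponent $\rho(H)-(q/2)\rho_m|H|$ is bounded above by $-(q/2-1-\varepsilon)\rho_m|H|$ for small $\varepsilon$, which together with the polynomial factor $(1+|H|)^a$ yields absolute convergence with geometric decay — exactly the situation in the proof of Proposition \ref{kest2}. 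The $t$-dependence then comes out of $\psi(t,H)^{q/2}$ just as in Proposition \ref{kest2}: in the complex case $\psi_2(t,H)^{q/2}=|t|^{-nq/4}(1+|H|)^{aq/2}$ factors out a clean $|t|^{-nq/4}$, while in the rank one case one splits the $H$-integral at $|H|=|t|-1$ and uses $\psi_1$ to obtain $|t|^{-3q/4}$ for $|t|\ge 1$ and $|t|^{-nq/4}$ for $|t|\le 1$. Taking the $(2/q)$-th power gives precisely $\Vert s_t\Vert_{A_q}\le c\,\Psi(t)$, whence the claim.

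The main point to be careful about — and the only real obstacle — is the interplay of the exponential weights: one must verify that the gain $e^{-(q/2)\rho_m|H|}$ coming from the kernel decay beats the growth $e^{2\rho(H)}$ of the density $\delta$ after the factor $\varphi_0$ (which contributes $e^{-\rho(H)}$) is accounted for, i.e. that $(q/2)\rho_m > \rho(H)/|H|$ uniformly on $\overline{\mathfrak{a}_+}$; this holds for all $q>2$ because $\rho(H)/|H|$ is bounded and, in fact, the borderline is $q=2$. This is the same computation already performed in Proposition \ref{kest2} for the unweighted $L^q(X)$ norm, so in practice I would simply remark that the estimate of $\Vert s_t\Vert_{A_q}$ reduces, after absorbing $\varphi_0\delta\le c(1+|H|)^a e^{\rho(H)}$, to the integral estimated there, and invoke Proposition \ref{kest2}. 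The rank one and complex cases are handled in parallel, and the case $|t|\le 1$ in the rank one setting is routine and can be omitted as elsewhere in the paper.
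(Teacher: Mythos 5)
Your overall strategy --- apply Theorem \ref{diest5} to $s_{t}$ and control $\Vert s_{t}\Vert _{A_{q}}$ via the pointwise kernel bounds --- is the same as the paper's, but two steps as written do not hold up. First, $s_{t}$ does \emph{not} belong to $S\left( K\backslash G/K\right) $, so Theorem \ref{diest5} cannot be invoked with $\kappa =s_{t}$ directly. Your justification is flawed on both counts: $w_{t}\left( \lambda \right) =e^{it\left\vert \rho \right\vert ^{2}}e^{it\left\vert \lambda \right\vert ^{2}}$ has modulus one and does not decay, so it is not a Schwartz symbol; and the pointwise estimates of Section \ref{kern} give only decay of the order of $\varphi _{0}$ times a polynomially growing factor (in the complex case $\left\vert s_{t}\right\vert =c\left\vert t\right\vert ^{-n/2}\varphi _{0}$ exactly), which is far from the $\varphi _{0}\left( 1+\left\vert H\right\vert \right) ^{-N}$ decay required for membership in the Schwartz space. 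The paper circumvents this by truncating: it sets $s_{t}^{R}=\omega _{R}s_{t}$ with a compactly supported cut-off, applies Theorem \ref{diest5} to $s_{t}^{R}$, proves $\Vert s_{t}^{R}\Vert _{A_{q}}\leq c\Psi \left( t\right) $ uniformly in $R$, and lets $R\rightarrow \infty $. Some such limiting argument is needed.

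Second, your convergence check for the $A_{q}$-integral fails in the complex case when the rank is at least two. Inserting the weakened bound $\left\vert s_{t}\left( \exp H\right) \right\vert \leq c\psi \left( t,H\right) e^{-\rho _{m}\left\vert H\right\vert }$ of (\ref{kest1}) leads you to the exponent $\rho \left( H\right) -\left( q/2\right) \rho _{m}\left\vert H\right\vert $, which you claim is negative for all $q>2$ because ``$\left( q/2\right) \rho _{m}>\rho \left( H\right) /\left\vert H\right\vert $ uniformly''. But $\rho \left( H\right) /\left\vert H\right\vert $ ranges up to $\left\vert \rho \right\vert $ on $\overline{\mathfrak{a}_{+}}$, and in rank $\geq 2$ one has $\rho _{m}<\left\vert \rho \right\vert $ strictly; hence for $2<q<2\left\vert \rho \right\vert /\rho _{m}$ the exponent is positive in the direction of $\rho $ and your majorizing integral diverges. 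The repair is to keep the sharper bound $\left\vert s_{t}\left( \exp H\right) \right\vert \leq c\left\vert t\right\vert ^{-n/2}\left( 1+\left\vert H\right\vert \right) ^{a}e^{-\rho \left( H\right) }$ coming from (\ref{ft01}) and (\ref{phi00}); then the exponent becomes $\left( 1-q/2\right) \rho \left( H\right) \leq \left( 1-q/2\right) \rho _{m}\left\vert H\right\vert <0$ and the integral converges for every $q>2$. In the rank one case your computation agrees with the paper's; with these two repairs the argument goes through.
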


\begin{proof}
Assume that $M$ has rank one. Let $\omega _{R}$, $R\in \mathbb{R}$, be an
even $C^{\infty }$ cut-off function on $\mathbb{R}$, such that $\omega
_{R}\left( r\right) =1$, for $\left\vert r\right\vert \leq R$ and $\omega
_{R}\left( r\right) =0$, for $\left\vert r\right\vert >2R$.

Set $s_{t}^{R}\left( r\right) =\omega _{R}\left( r\right) s_{t}\left(
r\right) $. Then, from Theorem \ref{diest5}, it follows that for $q>2$ and
any $R>0$, we have that 
\begin{equation*}
\left\Vert \ast s_{t}^{R}\right\Vert _{L^{q^{\prime }}\left( M\right)
\rightarrow L^{q}\left( M\right) }\leq c\left\Vert s_{t}^{R}\right\Vert
_{A_{q}}=c\left( \int_{\mathbb{R}_{+}}\left\vert s_{t}^{R}(r)\right\vert
^{q/2}\varphi _{0}(r)\delta (r)dr\right) ^{2/q}.
\end{equation*}

Using the estimates (\ref{schroest11}) of $s_{t}\left( r\right) $, and the
estimates 
\begin{equation*}
\varphi _{0}(r)\leq c\left( 1+r\right) ^{\alpha }e^{-r\left\vert \rho
\right\vert }\text{ and }\delta (r)\leq e^{2r\left\vert \rho \right\vert },
\end{equation*}%
we get that for $q>2$ and $\left\vert t\right\vert >1$, 
\begin{equation*}
\left\Vert s_{t}^{R}\right\Vert _{A_{q}}<ct^{-3/2}\text{, \ \ for any \ \ }%
R>0.
\end{equation*}%
Letting $R\rightarrow \infty $, we get that $s_{t}\in A_{q}$ and that 
\begin{equation*}
\left\Vert s_{t}\right\Vert _{A_{q}}<ct^{-3/2}\text{, \ for \ }\left\vert
t\right\vert >1.
\end{equation*}

The proof in the case $G$ complex is similar and then omitted.
\end{proof}

\subsection{\label{proof}Proof of the results}

Once we have established the necessary ingredients for the proof of
dispersive estimates of the Schr\"{o}dinger operator $\widehat{S}_{t}$ on $%
M\in \left( S\right) $, mainly the norm estimates of the kernel $\widehat{s}%
_{t}(x,y)$ obtained in Proposition \ref{kest2} and the estimates of the
operator norms of $\widehat{S}_{t}$ obtaind in Lemmata \ref{diest1} and \ref%
{diest12a}, we give the proofs of Theorems \ref{diest13} and \ref{diest140}
and present their applications to the study of well-posedness and scattering
for NLS equations.

Having obtained the proofs of the ingredients mentionned above, the proofs
become standard and they are similar to the proofs of the corresponding
results in \cite{ANPI,ANPIVA}. Consequently we shall be brief and present
only their main lines or simply we will omit them.

\begin{proof}[Proof of Theorem \protect\ref{diest13}]
Assume that $G$ is complex. Then, for $|t|<1$ and $2<q,\tilde{q}\leq \infty $%
, using the norm estimates of the kernel $\widehat{s}_{t}(x,y)$ we proved in
Lemma \ref{diest1}, we get that 
\begin{eqnarray*}
\Vert \widehat{S}_{t}\Vert _{L^{1}(M)\rightarrow L^{q}\left( M\right) }
&\leq &c|t|^{-n/2}, \\
\Vert \widehat{S}_{t}\Vert _{L^{\tilde{q}^{\prime }}(M)\rightarrow L^{\infty
}\left( M\right) } &\leq &c|t|^{-n/2}.
\end{eqnarray*}

Also, by the spectral theorem, we have that 
\begin{equation*}
\Vert \widehat{S}_{t}\Vert _{L^{2}(M)\rightarrow L^{2}\left( M\right) }=1.
\end{equation*}

By interpolation we get that there exists a constant $c>0$ such that 
\begin{equation*}
\Vert \widehat{S}_{t}\Vert _{L^{\tilde{q}^{\prime }}(M)\rightarrow
L^{q}(M)}\leq c|t|^{-n\max {\{\frac{1}{2}-\frac{1}{q},\frac{1}{2}-\frac{1}{%
\tilde{q}}\}}},\text{ if }|t|<1.
\end{equation*}

Similarly, for $|t|\geq 1$, by Lemmata \ref{diest1}, we have that 
\begin{eqnarray*}
\Vert \widehat{S}_{t}\Vert _{L^{1}(M)\rightarrow L^{q}(M)} &\leq
&c|t|^{-n/2}, \\
\Vert \widehat{S}_{t}\Vert _{L^{\tilde{q}^{\prime }}(M)\rightarrow L^{\infty
}(M)} &\leq &c|t|^{-n/2}.
\end{eqnarray*}

Further, using Kunze and Stein, we proved in Lemma \ref{diest12a} that 
\begin{equation*}
\Vert \widehat{S}_{t}\Vert _{L^{q^{\prime }}(M)\rightarrow L^{q}(M)}\leq
c\Vert s_{t}\Vert _{A_{q}}\leq c|t|^{-n/2}.
\end{equation*}

By interpolation it follows that for $|t|\geq 1$, 
\begin{equation*}
\Vert \widehat{S}_{t}\Vert _{L^{\tilde{q}^{\prime }}(M)\rightarrow
L^{q}(M)}\leq c|t|^{-n/2}.
\end{equation*}

The rank one case is similar and then omitted.
\end{proof}

\bigskip

\begin{proof}[Proof of of Theorem \protect\ref{diest140}]
As it is allready mentioned in the Introduction, to prove the Strichartz
estimates 
\begin{equation}
\Vert u\Vert _{L_{t}^{p}L_{x}^{q}}\leq c\left\{ \Vert f\Vert
_{L_{x}^{2}}+\Vert F\Vert _{L_{t}^{\tilde{p}^{\prime }}L_{x}^{\tilde{q}%
^{\prime }}}\right\} ,  \label{stri00}
\end{equation}%
for the solutions $u\left( t,x\right) $ of the Cauchy problem 
\begin{equation}
\left\{ 
\begin{array}{l}
i\partial _{t}u(t,x)+\Delta u(t,x)=F(t,x),\text{ \ }t\in \mathbb{R}\text{, \ 
}x\in M, \\ 
u(0,x)=f(x),%
\end{array}%
\right.  \label{lcp1}
\end{equation}%
we shall combine the dispersive estimates of the operator $e^{it\Delta }$
obtained in Theorem \ref{diest13} with the classical $TT^{\ast }$ method.
This method consists in proving $L_{t}^{p^{\prime }}L_{x}^{q^{\prime
}}\rightarrow L_{t}^{p}L_{x}^{q}$ boundedness of the operator 
\begin{equation*}
TT^{\ast }{F}(t,x)=\int_{-\infty }^{+\infty }{\widehat{S}}_{t-s}{F}(s,x)ds
\end{equation*}%
and of its truncated version 
\begin{equation*}
\widetilde{TT}^{\ast }{F}(t,x)=\int_{0}^{t}{\widehat{S}}_{t-s}{F}(s,x)ds,
\end{equation*}%
for all admissible indices $\left( p,q\right) $. For that we shall make also
use of the fact that the solutions of (\ref{lcp1}) are given by Duhamel's
formula: 
\begin{equation}
u(t,x)=e^{it\Delta }f\left( x\right) -i\int_{0}^{t}e^{i\left( t-s\right)
\Delta }F\left( s,x\right) ds.  \label{duha}
\end{equation}

Assume that the pairs $(p,q)$ satisfy 
\begin{equation*}
\tfrac{1}{q}\in \left( \left( \tfrac{{1}}{2}-\tfrac{1}{n}\right) ,\tfrac{1}{2%
}\right) \text{ and }\tfrac{1}{p}\in \left( \left( \tfrac{{1}}{2}-\tfrac{1}{q%
}\right) \tfrac{n}{2},\tfrac{1}{2}\right) .
\end{equation*}%
From (\ref{duha}) it follows that to finish the proof of the theorem, it is
enough to show that 
\begin{equation}
\left\Vert \int_{-\infty }^{+\infty }{\widehat{S}}_{t-s}{F(s)}ds\right\Vert
_{L_{t}^{p}L_{x}^{q}}\leq c\Vert F\Vert _{L_{t}^{\tilde{p}^{\prime }}L_{x}^{%
\tilde{q}^{\prime }}}  \label{stri1}
\end{equation}%
and 
\begin{equation}
\left\Vert {\widehat{S}}_{t}{f(s)}\right\Vert _{L_{t}^{p}L_{x}^{q}}\leq
c\Vert f\Vert _{L_{x}^{2}}.  \label{stri1.5}
\end{equation}%
We give only the proof of (\ref{stri1}). The proof of (\ref{stri1.5}) is
similar. We have that

\begin{eqnarray*}
\left\Vert \int_{-\infty }^{+\infty }{\widehat{S}}_{t-s}{F(s)}ds\right\Vert
_{L_{x}^{q}} &\leq &\int_{-\infty }^{+\infty }\left\Vert {\widehat{S}}_{t-s}{%
F(s)}\right\Vert _{L_{x}^{q}}ds \\
&\leq &\int_{-\infty }^{+\infty }\left\Vert {\widehat{S}}_{t-s}\right\Vert
_{L_{x}^{q^{\prime }}\rightarrow L_{x}^{q}}\left\Vert F\left( s\right)
\right\Vert _{L_{x}^{q^{\prime }}}ds.
\end{eqnarray*}%
Assume that $M$ has rank one. Then from Theorem \ref{diest13} we get that 
\begin{eqnarray*}
\left\Vert \int_{-\infty }^{+\infty }{\widehat{S}}_{t-s}{F(s)}ds\right\Vert
_{L_{t}^{p}L_{x}^{q}} &\leq &\left\Vert \int_{|t-s|\geq 1}|t-s|^{-3/2}\Vert {%
F}(s)\Vert _{L_{x}^{q^{\prime }}}ds\right\Vert _{L_{t}^{p}} \\
&&+\left\Vert \int_{|t-s|\leq 1}|t-s|^{-\left( 1/2-1/q\right) n}\Vert {F}%
(s)\Vert _{L_{x}^{q^{\prime }}}ds\right\Vert _{L_{t}^{p}} \\
&:&=I_{1}+I_{2}.
\end{eqnarray*}

To estimate $I_{1}$ and $I_{2}$ we consider the operators 
\begin{equation*}
T_{1}(f)(t)=\int_{|t-s|\geq 1}|t-s|^{-3/2}f(s)ds
\end{equation*}%
and 
\begin{equation*}
T_{2}(f)(t)=\int_{|t-s|\leq 1}|t-s|^{-\left( 1/2-1/q\right) n}f(s)ds.
\end{equation*}%
Note the kernel $k_{1}(u)=|u|^{-3/2}\chi _{\{|u|\geq 1\}}$ of $T_{1}$ is
bounded on $L^{1}$. So, $T_{1}$ is bounded from $L^{p^{\prime }}\ $to $L^{p}$
for every $p\in \lbrack 2,\infty ]$. Similarly, $k_{2}(u)=|u|^{-\left(
1/2-1/q\right) n}\chi _{\{|u|\leq 1\}}$ is bounded on $L^{1}$ if $\tfrac{1}{q%
}\in \left( \left( \tfrac{{1}}{2}-\tfrac{1}{n}\right) ,\tfrac{1}{2}\right) $%
. This implies that and $T_{2}$ is bounded from $L^{p_{1}}$ to $L^{p_{2}}$
for all $p_{1},p_{2}\in \left( 1,\infty \right) $, such that $0\leq \frac{1}{%
p_{1}}-\frac{1}{p_{2}}\leq 1-\left( \frac{1}{2}-\frac{1}{q}\right) n$. So, 
\begin{equation*}
I_{1},I_{2}\leq c\Vert {F}(s)\Vert _{L_{t}^{\tilde{p}^{\prime }}L_{x}^{%
\tilde{q}^{\prime }}},
\end{equation*}%
for all admissible pairs $\left( p,q\right) $ and $\left( \tilde{p},\tilde{q}%
\right) $ as above. The proof of the case $G$ complex is similar.
\end{proof}

\subsection{\label{appli}Applications of Strichartz estimates}

In this section we apply Strichartz estimates to study well-posedness and
scattering for NLS equations.

Consider the Cauchy problem for the inhomogeneous Schr\"{o}dinger equation
on $M$: 
\begin{equation}
\left\{ 
\begin{array}{l}
i\partial _{t}u(t,x)+\Delta u(t,x)=F(u(t,x)),\text{ \ }t\in \mathbb{R}\text{%
, \ }x\in M, \\ 
u(0,x)=f(x),%
\end{array}%
\right.  \label{lcp2}
\end{equation}%
and assume that $F$ has a power-like nonlinearity of order $\gamma $, i.e. 
\begin{equation*}
|F(u)|\leq c|u|^{\gamma },\,\,\,|F(u)-F(v)|\leq c\left( |u|^{\gamma
-1}+|v|^{\gamma -1}\right) |u-v|\,.
\end{equation*}

Recall that the NLS is globally well-posed in $L^{2}(M)$ if, for any bounded
subset $B$ of $L^{2}(M),$ there exists a Banach space $Y$ continuously
embedded into $C(\mathbb{R};L^{2}(M))$, such that for any $f\in B$, the NLS
has a unique solution $u\in Y$ with $u(0,x)=f(x)$ and the map $%
T:B\rightarrow Y,$ $T(f)=u$ is continuous. Here, as in \cite{ANPI}, we take 
\begin{equation*}
Y=Y_{\gamma }=C(\mathbb{R};L^{2}(M))\cap {L^{\gamma +1}(\mathbb{R};L^{\gamma
+1}(M))},
\end{equation*}%
which is a Banach space for the norm 
\begin{equation*}
\Vert u\Vert _{Y_{\gamma }}=\Vert u\Vert _{L_{t}^{\infty }L_{x}^{2}}+\Vert
u\Vert _{L_{t}^{\gamma +1}L_{x}^{\gamma +1}},
\end{equation*}%
and $B=B\left( 0,\varepsilon \right) \subset L^{2}(M)$.

We have the following result.

\begin{theorem}
\label{wepo}Assume that $M\in \left( S_{0}\right) $ and that $F$ has a
power-like nonlineariry of order $\gamma $. If $\gamma \in (1,1+\frac{4}{n}]$%
, then the NLS (\ref{lcp2}) is globally well-posed for small $L^{2}$ data.
\end{theorem}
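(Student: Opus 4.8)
The plan is to prove well-posedness by a standard fixed point argument for the Duhamel integral equation, using the Strichartz estimates of Theorem~\ref{diest140} as the sole analytic input. First I would reformulate the problem: a solution of (\ref{lcp2}) is a fixed point of the map
\begin{equation*}
\Phi(u)(t,x) = e^{it\Delta}f(x) - i\int_0^t e^{i(t-s)\Delta}F(u(s,x))\,ds,
\end{equation*}
so it suffices to show that, for $\varepsilon$ small enough, $\Phi$ maps a closed ball in $Y_\gamma$ of suitable radius into itself and is a contraction there. The exponent $\gamma\in(1,1+\tfrac4n]$ should be checked to guarantee that the pair $(p,q)=(\gamma+1,\gamma+1)$ is admissible, i.e.\ $(\tfrac1{\gamma+1},\tfrac1{\gamma+1})\in T_n$; indeed $\tfrac{2}{\gamma+1}+\tfrac{n}{\gamma+1}\ge \tfrac n2$ is equivalent to $\gamma\le 1+\tfrac4n$, and $\tfrac1{\gamma+1}<\tfrac12$ is equivalent to $\gamma>1$. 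Thus the diagonal pair we intend to use lies in the admissible triangle, and Theorem~\ref{diest140} applies with $p=\tilde p=q=\tilde q=\gamma+1$.

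Next I would run the estimates. Apply (\ref{stri0}) with $f$ as given and with $F$ replaced by the nonlinearity $F(u)$, plus the trivial bound $\|e^{it\Delta}f\|_{L^\infty_tL^2_x}\le\|f\|_{L^2}$ from the spectral theorem and the $L^2$-conservation part built into $Y_\gamma$; this yields
\begin{equation*}
\|\Phi(u)\|_{Y_\gamma}\le c\|f\|_{L^2(M)} + c\|F(u)\|_{L_t^{(\gamma+1)'}L_x^{(\gamma+1)'}}.
\end{equation*}
The pointwise bound $|F(u)|\le c|u|^\gamma$ gives $\|F(u)\|_{L_t^{(\gamma+1)'}L_x^{(\gamma+1)'}} = c\big\||u|^\gamma\big\|_{L_t^{(\gamma+1)'}L_x^{(\gamma+1)'}} = c\|u\|_{L_t^{\gamma+1}L_x^{\gamma+1}}^{\gamma}\le c\|u\|_{Y_\gamma}^{\gamma}$, where one uses $\gamma\cdot(\gamma+1)'=\gamma+1$. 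Hence $\|\Phi(u)\|_{Y_\gamma}\le c\varepsilon + c\|u\|_{Y_\gamma}^\gamma$, so on the ball $\{\|u\|_{Y_\gamma}\le R\}$ with $R$ chosen of order $\varepsilon$ (possible since $\gamma>1$ makes $cR^\gamma\ll R$ for small $R$) the map $\Phi$ is a self-map. For the contraction estimate I would use the difference hypothesis $|F(u)-F(v)|\le c(|u|^{\gamma-1}+|v|^{\gamma-1})|u-v|$ together with H\"older's inequality in both $t$ and $x$ (splitting the exponent $(\gamma+1)'$ as a product of the exponent carrying $u-v$ and the one carrying the $(\gamma-1)$-th powers), obtaining
\begin{equation*}
\|\Phi(u)-\Phi(v)\|_{Y_\gamma}\le c\big(\|u\|_{Y_\gamma}^{\gamma-1}+\|v\|_{Y_\gamma}^{\gamma-1}\big)\|u-v\|_{Y_\gamma}\le cR^{\gamma-1}\|u-v\|_{Y_\gamma},
\end{equation*}
which is a strict contraction once $R$ (equivalently $\varepsilon$) is small. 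The Banach fixed point theorem then produces a unique $u\in Y_\gamma$ solving the equation, and continuity of $f\mapsto u$ follows from the same estimates applied to the difference of two solutions with different data, giving global well-posedness in the sense defined before the statement.

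The step I expect to require the most care is the bookkeeping of H\"older exponents in the contraction estimate, especially verifying that the auxiliary Lebesgue exponents that appear when distributing $(\gamma+1)'$ over $|u-v|$ and $|u|^{\gamma-1}+|v|^{\gamma-1}$ are themselves admissible (or at least controlled by the $Y_\gamma$ norm); in the non-endpoint range $\gamma<1+\tfrac4n$ there is room to spare, but the endpoint $\gamma=1+\tfrac4n$ forces the diagonal admissible pair and leaves no slack, so one must check that the diagonal pair alone suffices and that no second, inadmissible, pair is secretly needed. Since the excerpt states that these arguments are "standard" and parallel \cite{ANPI,ANPIVA}, I would present only these main lines and refer to those papers for the routine exponent computations.
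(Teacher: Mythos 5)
Your proposal is correct and is precisely the argument the paper has in mind: the paper omits the proof, deferring to \cite{ANPI}, where the same contraction-mapping scheme for the Duhamel map on $Y_{\gamma}=C(\mathbb{R};L^{2})\cap L^{\gamma+1}(\mathbb{R};L^{\gamma+1})$ is carried out using the diagonal admissible pair $(\gamma+1,\gamma+1)$, whose membership in $T_{n}$ is exactly the condition $\gamma\in(1,1+\tfrac{4}{n}]$ that you verified. Your H\"older bookkeeping for the self-map and contraction estimates is also the standard one, and no second (possibly inadmissible) pair is needed.
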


Also, one can apply Strichartz's estimates in order to show scattering for
the NLS in the case of power-like nonlinearity of order $\gamma $ and for
small $L^{2}$ data.

\begin{theorem}
\label{scat}Assume that $M\in \left( S_{0}\right) $. Consider the Cauchy
problem (\ref{lcp2}) and assume that $F$ has a power-like nonlinearity of
order $\gamma \in (1,1+\frac{4}{n}]$. Then, for every global solution $u$
corresponding to small $L^{2}$ data, there exists $u_{\pm }\in L^{2}(M)$
such that 
\begin{equation*}
\Vert u(t)-e^{it\Delta }u_{\pm }\Vert _{L^{2}(M)}\rightarrow 0,\text{ as }%
t\rightarrow \pm \infty .
\end{equation*}
\end{theorem}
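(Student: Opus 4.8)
\textbf{Proof plan for Theorem \ref{scat} (scattering).}
The plan is to run the standard scattering argument based on Duhamel's formula and the Strichartz estimates of Theorem \ref{diest140}. First I would fix a global solution $u \in Y_\gamma$ corresponding to small $L^2$ data $f$, whose existence is guaranteed by Theorem \ref{wepo}; in particular $u \in L^\infty_t L^2_x \cap L^{\gamma+1}_t L^{\gamma+1}_x$. Recall that $(\gamma+1,\gamma+1)$ is an admissible pair in $T_n$ precisely when $\gamma \le 1 + \tfrac{4}{n}$, so the dual exponent pair is also admissible and the nonlinearity $F(u)$, which satisfies $|F(u)| \le c|u|^\gamma$, lies in $L^{\tilde p'}_t L^{\tilde q'}_x$ with $(\tilde p, \tilde q)$ the conjugate admissible pair. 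The natural candidates for the asymptotic states are
\begin{equation*}
u_{\pm} = f - i\int_0^{\pm\infty} e^{-is\Delta} F(u(s,\cdot))\, ds,
\end{equation*}
and the first task is to show these integrals converge in $L^2(M)$.

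Second, I would prove that $t \mapsto e^{-it\Delta} u(t,\cdot)$ is Cauchy in $L^2(M)$ as $t \to \pm\infty$, which gives both the existence of $u_\pm$ and the stated convergence. Applying $e^{-it\Delta}$ to Duhamel's formula \eqref{duha} gives, for $t_1 < t_2$,
\begin{equation*}
e^{-it_2\Delta} u(t_2) - e^{-it_1\Delta} u(t_1) = -i\int_{t_1}^{t_2} e^{-is\Delta} F(u(s,\cdot))\, ds,
\end{equation*}
and by the dual (inhomogeneous) Strichartz estimate — i.e. the $L^2_x$ bound for the Duhamel operator, which is part of the $TT^*$ machinery underlying Theorem \ref{diest140} — the $L^2(M)$ norm of the right-hand side is controlled by $\|F(u)\|_{L^{\tilde p'}([t_1,t_2]; L^{\tilde q'}_x)} \le c\|u\|_{L^{\gamma+1}([t_1,t_2];L^{\gamma+1}_x)}^{\gamma}$. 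Since $u \in L^{\gamma+1}_t L^{\gamma+1}_x$ globally, the tail $\|u\|_{L^{\gamma+1}([t_1,t_2];L^{\gamma+1}_x)}$ tends to $0$ as $t_1, t_2 \to \pm\infty$, so the sequence is Cauchy and converges to some $u_\pm \in L^2(M)$. Unitarity of $e^{it\Delta}$ on $L^2(M)$ (the spectral theorem) then yields $\|u(t) - e^{it\Delta} u_\pm\|_{L^2(M)} = \|e^{-it\Delta}u(t) - u_\pm\|_{L^2(M)} \to 0$.

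The main point to be careful about is that this argument needs not just the homogeneous Strichartz estimate \eqref{stri0} but the $L^2_x$-valued endpoint of the inhomogeneous estimate, namely $\big\|\int_0^t e^{i(t-s)\Delta} F(s)\,ds\big\|_{L^\infty_t L^2_x} \le c\|F\|_{L^{\tilde p'}_t L^{\tilde q'}_x}$; this is exactly the $(p,q) = (\infty, 2)$ corner $\left(0, \tfrac{1}{2}\right)$ included in $T_n$, and it follows from the same $TT^*$/Christ–Kiselev analysis used in the proof of Theorem \ref{diest140}. Since the requisite dispersive and Strichartz estimates are already in hand, the proof is routine and parallels the corresponding arguments in \cite{ANPI, ANPIVA}; I would therefore state it briefly, indicating the choice of $u_\pm$ and the tail estimate, and refer to those papers for the remaining details.
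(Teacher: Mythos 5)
Your proposal is correct and follows exactly the route the paper intends: the paper omits the proof of Theorem \ref{scat}, referring to \cite{ANPI}, and the argument there is precisely your standard one — Duhamel, the $(p,q)=(\infty,2)$ corner of the inhomogeneous Strichartz estimate from Theorem \ref{diest140}, the admissibility of $(\gamma+1,\gamma+1)$ for $\gamma\in(1,1+\tfrac{4}{n}]$, the tail estimate in $L^{\gamma+1}_tL^{\gamma+1}_x$, and unitarity of $e^{it\Delta}$ on $L^2(M)$. No gaps.
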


The proof of Theorems \ \ref{wepo} and \ref{scat} are similar to the proofs
of the corresponding results in \cite{ANPI} and then omitted.

\end{document}